\documentclass[11pt, reqno]{amsart}

\usepackage{amssymb,geometry,stmaryrd,color}
\usepackage[T1]{fontenc}
\usepackage{dsfont}

\makeatletter
\@namedef{subjclassname@2020}{\textup{2020} Mathematics Subject Classification}
\makeatother

\evensidemargin\oddsidemargin

\begin{document}

\baselineskip=18pt
\setcounter{page}{1}
    
\newtheorem{Conj}{Conjecture}
\newtheorem{TheoA}{Theorem A\!\!}
\newtheorem{TheoB}{Theorem B\!\!}
\newtheorem{TheoC}{Theorem C\!\!}
\newtheorem{TheoD}{Theorem D\!\!}
\newtheorem{Theo}{Theorem}
\newtheorem{Lemm}{Lemma}
\newtheorem{Rem}{Remark}
\newtheorem{Ex}{Example}
\newtheorem{Def}{Definition}
\newtheorem{Coro}{Corollary}
\newtheorem{Propo}{Proposition}

\renewcommand{\theConj}{}
\renewcommand{\theTheoA}{}
\renewcommand{\theTheoB}{}
\renewcommand{\theTheoC}{}
\renewcommand{\theTheoD}{}

\def\a{\alpha}
\def\b{\beta}
\def\g{\gamma}
\def\B{{\bf B}} 
\def\CC{{\mathbb{C}}} 
\def\cG{{\mathcal{G}}} 
\def\cB{{\mathcal{B}}} 
\def\cI{{\mathcal{I}}} 
\def\cS{{\mathcal{S}}}
\def\UU{{\mathcal{U}}}
\def\ca{c_{\a}}
\def\ka{\kappa_{\a}}
\def\coa{c_{\a, 0}}
\def\cua{c_{\a, u}}
\def\cL{{\mathcal{L}}} 
\def\cW{{\mathcal{W}}} 
\def\Ea{E_\a}
\def\Eab{E_{\a,\b}}
\def\eps{{\varepsilon}} 
\def\esp{{\mathbb{E}}} 
\def\Ga{{\Gamma}} 
\def\G{{\bf G}} 
\def\K{{\bf K}}
\def\HH{{\bf H}}
\def\ii{{\rm i}}
\def\e{{\rm e}}
\def\L{{\bf L}}
\def\lbd{\lambda}
\def\lacc{\left\{}
\def\lcr{\left[}
\def\lpa{\left(}
\def\lva{\left|}
\def\M{{\bf M}}
\def\T{{\bf T}}
\def\Ma{\M_\a}
\def\Mab{\M_{\a,\b}}
\def\NN{{\mathbb{N}}} 
\def\pb{{\mathbb{P}}}
\def\pa{{\varphi_\a}}
\def\paa{{\varphi_{\a,1-\a}}} 
\def\pab{{\varphi_{\a,\b}}} 
\def\tpab{\hat{\varphi}_{a,b}} 
\def\tp{\tilde{p}}
\def\tq{\tilde{q}}
\def\hp{\hat{p}}
\def\tZ{\tilde{Z}} 
\def\tphi{\tilde{\varphi}} 
\def\tpsi{\tilde{\psi}} 
\def\rl{{\mathbb{R}}}
\def\racc{\right\}}
\def\rpa{\right)}
\def\rcr{\right]}
\def\rva{\right|}
\def\prost{{\succ_{\! st}}}
\def\W{{\bf W}}
\def\X{{\bf X}}
\def\Z{{\bf Z}}
\def\Xab{\X_{\a,\b}}
\def\XX{{\mathcal X}}
\def\Y{{\bf Y}}
\def\U{{\bf U}}
\def\V{{\bf V}}
\def\Un{{\bf 1}}
\def\ZZ{{\mathbb{Z}}}
\def\A{{\bf A}}
\def\AA{{\mathcal A}}
\def\hAA{{\hat \AA}}
\def\hL{{\hat L}}
\def\hT{{\hat T}}

\def\claw{\stackrel{d}{\longrightarrow}}
\def\elaw{\stackrel{d}{=}}
\def\qed{\hfill$\square$}

\newcommand*\pFqskip{8mu}
\catcode`,\active
\newcommand*\pFq{\begingroup
        \catcode`\,\active
        \def ,{\mskip\pFqskip\relax}%
        \dopFq
}
\catcode`\,12
\def\dopFq#1#2#3#4#5{%
        {}_{#1}F_{#2}\biggl[\genfrac..{0pt}{}{#3}{#4};#5\biggr]%
        \endgroup
}

\title{Persistence probabilities of autoregressive chains with continuous innovations}

\author[Titouan Donnart]{Titouan Donnart}

\address{Laboratoire Paul Painlev\'e, UMR 8524, Universit\'e de Lille, 42 rue Paul Duez, 59000 Lille, France. {\em Email}: {\tt titouan.donnart@univ-lille.fr}}

\author[Thomas Simon]{Thomas Simon}

\address{Laboratoire Paul Painlev\'e, UMR 8524, Universit\'e de Lille, 42 rue Paul Duez, 59000 Lille, France. {\em Email}: {\tt thomas.simon@univ-lille.fr}}

\keywords{Autoregressive chain; Basic hypergeometric series; Compoung-geometric distribution; Log-convexity; Persistence probabilities; Quasi-infinite divisibility; Van Dantzig problem}

\subjclass[2020]{33D15; 60E07; 60E10; 60J05}

\begin{abstract} We consider the persistence probabilities of an autoregressive chain of order one with continuous  innovations. In the case of positive drifts, we show that these persistence probabilities are compound-geometric and satisfy a Baxter-Spitzer factorization generalizing that of the random walk. In the case of negative drifts, we exhibit a discrete Van Dantzig problem, which implies that the Baxter-Spitzer factorization never happens, except in a degenerate case. For positive drifts and log-concave innovations, we show that the first passage time in $(-\infty,0)$ has a log-convex distribution, whereas in the case of negative drifts and log-convex innovations on $\rl^+$, it has a log-concave distribution. The case of the bi-exponential innovations is studied in detail, which leads for positive drifts to an additive factorization of the exponential law.    

\end{abstract}

\maketitle

\section{Introduction and presentation of the results}
We consider in this paper a Markov chain $\{Z_n\}_{n \ge 0}$ defined by 
\begin{equation}
\label{MCMC}
Z_0\, = \, 0 \quad \text{and} \quad Z_n \, =\, \theta Z_{n-1} + X_n \quad \text{for } n \ge 1,
\end{equation}
where $\theta \in \mathbb{R}$ is called a drift parameter and $\{X_n\}_{n \ge 1}$ is a sequence of i.i.d. random variables whose common distribution $\mu$ is non-degenerate, and which we call the innovation sequence. This is an autoregressive chain of order one which can be viewed as a discrete version of the Ornstein-Uhlenbeck process and appears frequently in modelling as a particular instance of the ARMA process - see e.g. Chapter 3 in \cite{BD}. This is also a generalization of the random walk which corresponds to the case $\theta = 1.$ The driftless case $\theta =0$ where $\{Z_n\}_{n \ge 1}$ is itself i.i.d. will be implicitly excluded in the sequel. We study here the first passage time below zero
$$T_\theta\, =\, \inf\{n \ge 0, \; Z_{n+1} < 0\},$$
and its reliability function $p_n(\theta) = \mathbb{P}[T_\theta \ge n].$ Strictly speaking, the random variable $T_\theta$ is a shifted first passage time, which is however more convenient to formulate our results than the more common random variable $1+T_\theta = \inf\{n \ge 1, \; Z_{n} < 0\}.$  We will set $q_n(\theta) = \pb[T_\theta = n] = p_{n}(\theta) - p_{n+1}(\theta)$ for the (possibly defective) probability mass function of the random variable $T_\theta.$ The two generating functions
\begin{equation}
\label{Gene}
\varphi_\theta (z) \, =\, \sum_{n\ge 0} p_n(\theta) \, z^n\qquad\mbox{and}\qquad \psi_\theta (z) \, =\, \sum_{n\ge 0} q_n(\theta) \, z^n
\end{equation}
are well-defined on $(-1,1)$ and connected to one another by the formula
\begin{equation}
\label{Connect}
\varphi_\theta(z)\, =\, \frac{1-z\psi_\theta(z)}{1-z}\cdot
\end{equation}
Notice that for $n = 0,$ one has $p_0(\theta) = 1$ whereas for $n\ge 1,$ one has 
$$p_n(\theta)\, = \, \mathbb{P}[Z_1 \geqslant 0, \ldots, Z_n \geqslant  0].$$
The above quantity is often called in the literature a persistence probability, whose asymptotic behaviour as $n\to\infty$ has been well studied in recent years. Setting $X$ for the random variable having law $\mu,$ it is shown in Theorem 1 of \cite{HKW} that if $\pb[X > 0]\pb[X < 0] > 0$ and $\esp[\log (1 + \vert X\vert)] < \infty,$ then for all $\theta \in (0,1)$ there exists $\lbd_\theta \in (0,1]$ such that
\begin{equation}
\label{Rough}
\pb_x[T_\theta > n]^{1/n} \, \to\, \lbd_\theta, \qquad n\to \infty
\end{equation}
for all $x\ge 0,$ where $\pb_x$ stands for the law of $\{Z_n\}$ starting from $Z_0 = x.$ This logarithmic estimate can be refined as
\begin{equation}
\label{Fine}
\pb_x[T_\theta = n] \, \sim\, V(x)\, \lbd_\theta^{n}, \qquad n\to \infty
\end{equation}
for some positive function $V(x)$ and all $\theta \in (0,1),$ under some further assumptions on the innovation law $\mu$ - see Theorem 10 in \cite{HKW}. Moreover, the quantity $\lbd_\theta$ can be identified as the largest eigenvalue of some truncated operator built on the transition kernel of $\{Z_n\}$ - see Theorem 2.6 in \cite{AMZ}. See \cite{KN} for previous studies on the exponential boundedness of $T_\theta$ using a martingale approach. Let us also refer to \cite{AS} for a survey on persistence probabilities. 

In this paper, we will obtain some non-asymptotic results on the sequence $\{p_n(\theta)\}.$ We will need the general assumption that the innovation law $\mu$ has no atoms. This assumption is rather natural since if $\mu$ has an atom, then the support of $\{Z_n\}$ typically evolves with time because of the drift, which makes the exact study of $p_n(\theta)$ very complicated. See however the recent paper \cite{VW} for a precise asymptotic study when the innovations are Rademacher. In order to state our first main result, we need some further notation. Introduce the associate Markov chain $\{\tZ_n\}_{n \ge 0}$ defined by 
\[
\tZ_0\, = \, 0 \quad \text{and} \quad \tZ_n \, =\, \theta \tZ_{n-1} - X_n \quad \text{for } n \ge 1,
\]
and set $\{\tp_n(\theta)\}_{\{n\ge 0\}}$ and $\{\tq_n(\theta)\}_{\{n\ge 0\}}$ for the corresponding persistence and first passage probabilities, together with their generating functions $\tphi_\theta (z)$ and $\tpsi_\theta(z)$ as defined in \eqref{Gene}. In the following, when we state an identity between two entire series in $z$, we implicitly assume that the identity holds for all $z\in(-1,1),$ unless otherwise explicitly stated where it holds on a broader interval. 

\begin{TheoA} Assume $X$ has no atoms. Then, for all $\theta > 0$ one has
$$\varphi_\theta(z)\, =\, \frac{1}{1 - z\tpsi_{1/\theta}(z)}\cdot$$
\end{TheoA}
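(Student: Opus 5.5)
The plan is to turn the chain into a random walk with weighted steps and then run a Sparre Andersen type decomposition at the location of the minimum. The target is the convolution identity
$$\sum_{k=0}^n \tp_k(1/\theta)\, p_{n-k}(\theta)\, =\, 1 \qquad (n\ge 0),$$
that is, $\tphi_{1/\theta}(z)\,\varphi_\theta(z) = 1/(1-z)$. Combined with the analogue of \eqref{Connect} for the associate chain, namely $\tphi_{1/\theta}(z) = (1-z\tpsi_{1/\theta}(z))/(1-z)$, this yields exactly the statement of Theorem A.

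\emph{Step 1: reduction to a weighted walk.} Fix $\theta>0$ and $n\ge 0$. Since $Z_j=\sum_{l=1}^j \theta^{j-l}X_l$, set
$$W_0=0,\qquad W_j=\theta^{-j}Z_j=\sum_{l=1}^j\theta^{-l}X_l .$$
Because $\theta>0$, each $Z_j$ has the same sign as $W_j$. Each nontrivial linear combination of the $X_l$ with nonzero coefficients is atomless, since $\mu$ has no atoms and the $X_l$ are independent. Hence almost surely the values $W_0,\dots,W_n$ are pairwise distinct. So the index $K=\mathrm{argmin}_{0\le j\le n}W_j$ is a.s. well defined, and
$$1=\sum_{k=0}^n\pb[K=k].$$

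\emph{Step 2: factorizing $\pb[K=k]$.} The event $\{K=k\}$ is, up to null sets, the intersection of two events $A$ and $B$.
\begin{itemize}
\item The event $A=\{W_{k-i}>W_k,\ 1\le i\le k\}$ depends only on $X_1,\dots,X_k$. The condition reads $\sum_{l=k-i+1}^k\theta^{-l}X_l<0$. Multiply by $\theta^{k-i+1}>0$ and set $Y_m=X_{k-m+1}$, which gives a vector with the same law as $(X_1,\ldots,X_k)$. The condition becomes
$$\sum_{m=1}^i(1/\theta)^{i-m}(-Y_m)>0,$$
that is, $\tZ_i>0$ for the associate chain with drift $1/\theta$, for $i=1,\dots,k$. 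Hence $\pb[A]=\tp_k(1/\theta)$, using that strict and non-strict inequalities agree a.s.
\item The event $B=\{W_{k+j}\ge W_k,\ 1\le j\le n-k\}$ depends only on $X_{k+1},\dots,X_n$. Multiplying by $\theta^{k+j}$, it reads $\sum_{l=1}^{j}\theta^{j-l}X_{k+l}\ge 0$, which is the persistence event of a fresh copy of $\{Z_n\}$ up to time $n-k$. Hence $\pb[B]=p_{n-k}(\theta)$.
\end{itemize}
By independence of $A$ and $B$, $\pb[K=k]=\tp_k(1/\theta)\,p_{n-k}(\theta)$. The case $k=0$ recovers $p_n(\theta)=\pb[\min_{j\le n} W_j=W_0]$.

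\emph{Step 3: generating functions and the main obstacle.} Summing over $k$ gives the convolution identity for every $n$, hence $\tphi_{1/\theta}\varphi_\theta=1/(1-z)$ on $(-1,1)$. Dividing by $\tphi_{1/\theta}$ is then legitimate: the identity shows $\tphi_{1/\theta}$ does not vanish on $(-1,1)$, and in any case it is positive on $[0,1)$. The substitution of the connection formula finishes the proof.

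The only delicate point is Step 2, specifically two things:
\begin{itemize}
\item the exact index bookkeeping when reversing time, checking that the reversed partial sums produce precisely the associate chain with drift $1/\theta$ rather than some rescaled variant;
\item the handling of strict versus non-strict inequalities.
\end{itemize}
The second is resolved by the no-atom assumption, and this is where the hypothesis enters. Positivity of $\theta$ is used exactly to multiply the inequalities by powers of $\theta$ without reversing them.
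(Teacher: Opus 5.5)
Your proof is correct, and it follows a genuinely different route from the paper's.

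\textbf{The paper's route.} It argues analytically. Starting from the integral recursion (38) of \cite{ABKS}, it shows that $p_n(\theta)+p_{n-1}(\theta)\tp_1(1/\theta)$ equals $p_{n-1}(\theta)$ plus an iterated integral. It then unfolds that integral one variable at a time to get $S_n=S_{n-1}$, where $S_n=\sum_{k=0}^n p_{n-k}(\theta)\tp_k(1/\theta)$. Finally it telescopes down to $S_1=p_1(\theta)+\tp_1(1/\theta)=1$.

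\textbf{Your route.} You get $S_n=1$ in one stroke by partitioning according to the a.s.\ unique position $K$ of the minimum of $W_j=\theta^{-j}Z_j$ on $\{0,\ldots,n\}$. This is the classical Feller--Sparre Andersen argument. It applies here because $W$ has independent, though not identically distributed, increments $\theta^{-l}X_l$. Your bookkeeping for the reversed segment is right: multiplying the $i$-th inequality by $\theta^{k-i+1}$ yields exactly the associate chain with drift $1/\theta$.

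\textbf{Comparison.} Your argument is shorter and self-contained, with no appeal to the recursion of \cite{ABKS}. It explains why the reciprocal drift and the reflected innovations appear: they come from time reversal of a geometrically weighted walk. It also isolates where each hypothesis enters: $\theta>0$ keeps signs under rescaling, and the absence of atoms rules out ties. Finally, it gives each summand a probabilistic meaning, namely $\tp_k(1/\theta)\,p_{n-k}(\theta)=\pb[K=k]$, which is an arcsine-type law for the location of the minimum of the rescaled chain. The paper's computation stays within the integral framework of \cite{ABKS}, but it does not exhibit this interpretation.
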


This identity has several interesting consequences. We first deduce that there exists a sequence of weights $\{a_n(\theta)\}_{n\ge 1}$ such that 
$$\varphi_\theta (z) \, =\, \exp \lcr \sum_{n\ge 1}\, \frac{a_n(\theta)}{n} \, z^n\rcr$$
where, taking logarithms,
\begin{equation}
\label{Loga}
\sum_{n\ge 1}\, \frac{a_n(\theta)}{n} \, z^n\, =\, \sum_{n\ge 1}\, \frac{(z\tpsi_{1/\theta}(z))^n}{n}\cdot
\end{equation}
The latter identity implies that $a_n(\theta) \ge 0$ for all $n\ge 1.$ Moreover, combining Theorem A and \eqref{Connect} yields the factorization 
\begin{equation}
\label{Duelle}
\varphi_\theta(z)\tphi_{1/\theta}(z)\, =\, \Big(\frac{1}{1 - z\psi_{\theta}(z)}\Big)\Big(\frac{1}{1 - z\tpsi_{1/\theta}(z)}\Big)\, =\, \frac{1}{1-z}
\end{equation}
so that $a_n(\theta) = 1- {\tilde a}_n(1/\theta)\in[0,1]$ for all $n\ge 1.$ Observe that in the degenerate case $\pb[X > 0] = 0$ one has $\varphi_\theta(z) = 1$ and $a_n(\theta) = 0$ for all $n\ge 1,$ whereas for $\pb[X < 0] = 0$ one has $\varphi_\theta(z) = 1/(1-z)$ and $a_n(\theta) = 1$ for all $n\ge 1.$ In the non-degenerate case $\pb[X> 0]\pb[X< 0]  = \tpsi_{1/\theta}(0)\psi_{1/\theta}(0) >  0,$ a direct consequence of \eqref{Loga} is that $a_n(\theta)\in\;]0,1[$ for all $n\ge 1.$ In the random walk case $\theta = 1,$ the weights can be exactly evaluated as 
$$a_n(1)\, =\, \pb[Z_n\ge 0]$$ 
for all $n\ge 1$ by the classical Baxter-Spitzer formula - see e.g. \cite{DMK} p.186. In the general case $\theta > 0,$ we will call the product formula \eqref{Duelle} with weights $a_n(\theta)\in [0,1]$ a Baxter-Spitzer factorization. In the symmetric case $\varphi_\theta = \tphi_{1/\theta}$, this factorization was already obtained in Theorem 1.4 of \cite{ABKS}, without the identification of each factor as reciprocal entire series, as an extension of the classical Sparre Andersen identity for random walks. Our argument to obtain \eqref{Duelle} in the general case is similar to that of \cite{ABKS}, with a use of telescopic sums which makes the proof more transparent.

In the positive recurrent case $\theta\in (0,1)$ and $\esp[\log (1 + \vert X\vert)] < \infty,$ one has $\varphi_\theta(1) = \esp[T_\theta] + 1 < \infty$ except in the degenerate case $\pb[X<0] = 0,$ and we can consider the random variable ${\hat T}_\theta$ with probability mass function
$$\pb[{\hat T}_\theta = n] \, =\, \frac{p_n(\theta)}{\varphi_\theta(1)}$$
for all $n\ge 0,$ which is naturally associated to the persistence probabilities, and which we call the tail random variable associated to $T_\theta.$ In this regard, Theorem A means that the random variable ${\hat T_\theta}$ is compound-geometric, that is it is distributed as a random walk on $\ZZ_+$ stopped at an independent geometric time. We refer to Chapter II.3-5 in \cite{SV} for more details on compound-geometric random variables, which form a subclass of the infinitely divisible (ID) random variables on $\ZZ_+$. 

For $\theta\in (0,1), \pb[X<0] > 0$ and $\esp[\log (1 + \vert X\vert)] < \infty,$ Theorem A implies that $\tpsi_{1/\theta}(1) < 1$ so that the random variable ${\tilde T}_{1/\theta}$ is defective and one cannot define its associated tail random variable. On the other hand, one has $\mu_\theta :=\inf\{z\ge 0, \;z\tpsi_{1/\theta}(z) \ge 1\} \ge 1$ and we can identify the logarithmic constant in \eqref{Rough} as
$$\lbd_\theta\, =\, \frac{1}{\mu_\theta}\cdot$$
When the singularity of $\varphi_\theta$ at $\mu_\theta$ is not essential\footnote{We believe that it is always the case.}, one has $\mu_\theta = \inf\{z\ge 0, \;z\tpsi_{1/\theta}(z) = 1\}$ and this establishes an intriguing correspondence between the eigenvalue $\lbd_\theta$ and the first positive root, which is simple by the absolute monotonicity of $z\tpsi_{1/\theta}(z),$ of the special function $z\mapsto z\tpsi_{1/\theta}(z) - 1.$

\medskip

Our second main result is a refinement of the compound-geometric property in the case of log-concave innovations. Observe that this situation contains the Gaussian innovations, which are constantly used in modelling with ARMA processes - see again  
Chapter 3 in \cite{BD}, and also \cite{AK247} for the related persistence probabilities.
 
\begin{TheoB} Assume $X$ has a log-concave density and that $\pb[X > 0]\pb[X < 0] > 0.$ Then, for all $\theta > 0$ the sequence $\{q_n(\theta)\}_{n\ge 0}$ is log-convex.
\end{TheoB}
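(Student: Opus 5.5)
The plan is to show that the ratio $q_{n+1}(\theta)/q_n(\theta)$ is nondecreasing in $n$. I would write it as an expectation of a fixed monotone function against a family of probability measures that increases in $n$ in the monotone likelihood ratio order. The tool throughout is total positivity of order two (TP2) and Karlin's composition formula.

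\emph{Step 1: representation of the ratio.} Let $f$ be the log-concave density of $X$, $F$ its distribution function, and set $r(x)=F(-\theta x)=\pb_x[Z_1<0]$ for $x\ge 0$. Define the killed kernel $K(x,dy)=\mathbf{1}_{\{y\ge 0\}} f(y-\theta x)\,dy$ on $[0,\infty)$, and let $m_n$ be the (sub-probability) law of $Z_n$ on $\{T_\theta\ge n\}$, so that $m_n=\delta_0 K^n$. Then
\[
q_n(\theta)=\int r\,dm_n,\qquad q_{n+1}(\theta)=\int s\,dm_n,\qquad s(x)=\int K(x,dy)\,r(y).
\]
Let $\nu_n(dx)=r(x)\,m_n(dx)/q_n(\theta)$. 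This is a probability measure because $\pb[X>0]\pb[X<0]>0$ forces $q_n(\theta)>0$. It follows that
\[
\frac{q_{n+1}(\theta)}{q_n(\theta)}=\int \frac{s}{r}\,d\nu_n .
\]
Log-convexity of $\{q_n(\theta)\}$ is therefore equivalent to this ratio being nondecreasing in $n$.

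\emph{Step 2: monotonicity of $s/r$.} Since $f$ is log-concave, $(x,y)\mapsto f(y-\theta x)$ is TP2 for $\theta>0$, and $F$ and $1-F$ are log-concave. I would write
\[
\frac{s(x)}{r(x)}=\int_0^\infty \frac{f(y-\theta x)}{F(-\theta x)}\,F(-\theta y)\,dy .
\]
Its monotonicity in $x$ should follow from the following fact: the family of laws of $\theta x+X$ restricted to $[0,\infty)$ and renormalised by $F(-\theta x)$ is stochastically monotone in $x$. Combined with the log-concavity of $F(-\theta\,\cdot)$ and the TP2 property, this should give that $s/r$ is nondecreasing in $x$. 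I expect to check this by a direct two-point TP2 computation, i.e.\ a basic composition formula argument applied to $\mathbf{1}_{\{y\ge0\}}f(y-\theta x)F(-\theta y)$ divided by $F(-\theta x)$.

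\emph{Step 3: $\nu_n$ increases in $n$ in likelihood ratio order.} This is the main obstacle. For $n\ge 1$, $m_n$ has a density $g_n$ on $[0,\infty)$. It suffices to show that $g_{n+1}/g_n$ is nondecreasing, since the common factor $r$ cancels in the likelihood ratio of $\nu_{n+1}$ against $\nu_n$. I would argue by induction using $g_{n+1}(y)=\int g_n(x)f(y-\theta x)\,dx$. Karlin's composition formula, applied to the TP2 kernel $f(y-\theta x)$ and to the TP2 pair $(n,x)\mapsto g_n(x)$, propagates the monotone likelihood ratio property from $(g_{n-1},g_n)$ to $(g_n,g_{n+1})$. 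The base case compares $g_1=\mathbf{1}_{[0,\infty)}f$ with $g_2(y)=\int_0^\infty f(x)f(y-\theta x)\,dx$; it requires checking that $y\mapsto g_2(y)/f(y)$ is nondecreasing on $[0,\infty)$, which again reduces to the TP2 property of $f(y-\theta x)$ in $(x,y)$. The delicate point is the truncation at $0$: one must verify that multiplying by $\mathbf{1}_{\{y\ge0\}}$ preserves TP2 in the joint variable $(n,y)$, which I expect to hold since the indicator is a function of $y$ alone.

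\emph{Step 4: conclusion and the first step.} Steps 2 and 3 give $\int (s/r)\,d\nu_{n+1}\ge \int (s/r)\,d\nu_n$ for $n\ge 1$, that is $q_n(\theta)^2\le q_{n-1}(\theta)q_{n+1}(\theta)$ for $n\ge 2$. The case $n=1$, namely $q_1^2\le q_0q_2$, involves the degenerate starting law $\nu_0=\delta_0$ and must be handled separately. I would compute it explicitly, or note that $\delta_0$ lies below $\nu_1$ in the usual stochastic order, and use that the monotonicity of $s/r$ only requires stochastic order, not likelihood ratio order, at this step.
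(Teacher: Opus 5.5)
Your proof is correct and is essentially the paper's argument. Your $r$ and $s$ are the paper's $\rho$ and $\kappa$, your likelihood-ratio induction on $g_{n+1}/g_n$ is exactly the monotonicity of $f_{n+1}/f_n$ in \eqref{Eqfn}, and your treatment of $q_1^2\le q_0q_2$ is the same comparison as in \eqref{Caro}. The one real difference is that you weight $m_n$ by $\rho$ rather than by $\kappa$. The integrand is then $\kappa/\rho$ itself, so you can skip the function $h$ of Lemma~\ref{lem:g_non_decreasing}, and the first step fits the same scheme as the others.

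Two points need fixing. First, in Step~2 the stochastic-monotonicity heuristic does not work as stated: the restricted laws renormalised by $F(-\theta x)$ are not probability measures, and $F(-\theta\,\cdot)$ is decreasing, so it would push the wrong way. The claim follows directly by writing $F(-\theta x)=\int_{-\infty}^0 f(u-\theta x)\,du$ and applying log-concavity of $f$ to pairs $u\le 0\le y$, as in the first half of the proof of Lemma~\ref{lem:g_non_decreasing}; no log-concavity of $F$ is needed. Second, as the paper does, you must first reduce to $f>0$ on $\rl$ by approximation, or avoid dividing by writing Step~4 directly as a composition of TP2 kernels. When $f$ has bounded support, $r$ can vanish at points charged by $m_n$ where $s>0$, and then the identity $q_{n+1}/q_n=\int (s/r)\,d\nu_n$ fails.
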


The proof of Theorem B is independent of Theorem A and relies on the construction of a certain sequence of probability measures and the study of their stochastic ordering, which is ensured by the log-concavity condition on $X$. The reason why this result can be viewed a refinement of Theorem A comes from the well-known fact that the log-convexity of $\{q_n(\theta)\}_{n\ge 0}$ implies that of $\{p_n(\theta)\}_{n\ge 0},$ because
\begin{equation}
\label{LCLC}
p_n(\theta)p_{n+2}(\theta)\, -\, p_{n+1}(\theta)^2\, =\, \sum_{i\ge n+2} \Big(q_n(\theta)q_i(\theta)\, -\, q_{n+1}(\theta)q_{i-1}(\theta)\Big)
\end{equation}
is non-negative for all $n\ge 0.$ Indeed, since $p_1(\theta) = \pb[X>0] > 0,$  Kaluza's theorem on reciprocal entire series - see Satz 3 in \cite{Kaluza} - entails that the series
$$1\, -\, \frac{1}{\varphi_\theta(z)}$$
has non-negative coefficients, as shown in Theorem A which identifies this series with $z\tpsi_{1/\theta}(z)$. Observe that Kaluza's theorem and Theorem B also imply that
\begin{equation}
\label{Kalouze}
\psi_\theta (z) \, =\, \frac{\pb[X<0]}{1 -z \sigma_\theta(z)}
\end{equation}
for some non-negative series $\sigma_\theta(z),$ which means that the random variable $T_\theta$ itself is compound geometric. It is worth mentioning that for skip-free Markov chains on $\ZZ$, first passage time distributions to the nearest state are compound-geometric - see Theorem VII.2.1 in \cite{SV}. This property seems however to have been less frequently studied for first passage distributions of a Markov chain on a continuous state space, as is the case in the present paper.

\medskip

Our last main result handles the case $\theta < 0,$ which is very different. In this framework, the formula
$$p_n(\theta) \, = \, \int_0^\infty d\mu(x_1) \lpa \int_0^\infty\Un_{\{x_2\ge-\theta x_1\}}\,d\mu(x_2) \lpa \ldots \lpa \int_0^\infty \Un_{\{x_n\ge-(\theta^{n-1}x_1 +\cdots + x_{n-1})\}}\,d\mu(x_n)\right.\rpa \ldots \rpa$$
involves integrals on the positive orthant and the persistence probabilities will hence depend on the law of $X_+ = X\vee 0$ only. Moreover, if we set 
$$\rho\, =\, \pb[X\ge 0]$$
for the positivity parameter of $X$, then the above formula shows that $p_n(\theta) =\rho^n p^+_n(\theta)$ where $\{p^+_n(\theta)\}_{n\ge 0}$ are the persistence probabilities of the chain defined in \eqref{MCMC} with an innovation law given by that of $X^+ = X\vert X\ge 0.$ This reduces the study to the case $\rho = 1.$ Notice that in this case, there is a degenerate situation where 
\begin{equation}
\label{Kons}
p_n(\theta)\, =\, 1\qquad \mbox{for all $n\ge 0.$}
\end{equation}
It is easy to check - see Remark \ref{Misc1} (a) below - that this happens if and only if $\theta\ge -1$ and Supp $X\,\subset [c, C]$ for some $c,C\ge 0$ with $c+C\theta\ge 0.$ Observe also that in this degenerate situation one has $\tphi_{1/\theta}(z) =1$ so that the Baxter-Spitzer factorization is trivial. The following result states that the Baxter-Spitzer factorization cannot happen for $\theta < 0$ when \eqref{Kons} does not hold, since some weights $a_n(\theta)$ become negative. 
 
\begin{TheoC} Assume $\theta < 0,$ that \eqref{Kons} does not hold and that $X$ has no atoms. Then there exists a sequence of weights $\{a_n(\theta)\}_{n\ge 1}\in\rl$ with $a_n(\theta)\to 0$ as $n\to\infty,$ such that 
$$\varphi_\theta (z) \, =\, \exp \lcr \sum_{n\ge 1}\, \frac{a_n(\theta)}{n} \, z^n\rcr$$
for all $z\in [-1,1].$ If $\theta \in (-1,0)$ one has $a_2(\theta) = - a_2(1/\theta) > 0.$ The sequence $\{p_n(\theta)\}_{n\ge 0}$ is never log-convex. If $X$ has a log-convex density on $\rl^+,$ then the sequence $\{q_n(\theta)\}_{n\ge 1}$ is log-concave.
\end{TheoC}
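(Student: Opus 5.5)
The plan is to first reduce to the case $\rho=1$. From $p_n(\theta)=\rho^n p_n^+(\theta)$ we get $\varphi_\theta(z)=\varphi^+_\theta(\rho z)$, hence $a_n(\theta)=\rho^n a_n^+(\theta)$. Every sign statement and the limit $a_n\to 0$ then transfer directly from the $\rho=1$ case.

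\emph{Radius of convergence.} With $\rho=1$ and \eqref{Kons} failing, I would prove a uniform killing estimate: there exist $k\ge 1$ and $\delta>0$ with $p_{n+k}(\theta)\le(1-\delta)p_n(\theta)$. The mechanism is that $\theta<0$ makes a large value of $Z_{n-1}$ push $\theta Z_{n-1}$ far below zero. Meanwhile, failure of the condition ``$\theta\ge-1$ and $c+C\theta\ge 0$'' gives positive probability of crossing below zero within a bounded number of steps, uniformly in the starting point $Z_0\ge 0$. Since $X$ has no atoms, this probability is continuous in the starting point, which should yield the uniform bound. Consequently $p_n$ decays geometrically and $\varphi_\theta$ is analytic on a disc of radius $R>1$.

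\emph{Exponential representation.} The key step is an analogue of Theorem A for $\theta<0$, obtained by the same time-reversal and telescoping argument. Writing $Z_n=\sum_k\theta^{n-k}X_k$ and reversing the innovations produces $\theta^{n-1}$ times the chain with parameter $1/\theta$. Because $\theta^{n-1}$ alternates in sign, the $1/\theta$ chain enters through $\varphi_{1/\theta}$ or $\tphi_{1/\theta}$ evaluated with alternating signs ($z\mapsto -z$). I expect a product identity of Van Dantzig type linking $\varphi_\theta$ with the $1/\theta$ generating function, with right-hand side $1/(1-z)$ or a close variant. From this identity I would show that $\varphi_\theta$ has no zero in $|z|\le 1$, so that $\log\varphi_\theta=\sum_n a_n(\theta)z^n/n$ converges on a disc of radius $>1$. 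This gives the representation on $[-1,1]$, and in fact $a_n(\theta)\to 0$ geometrically. This nonvanishing step is where I expect the main obstacle. If the identity does not give it directly, I would try to bound $|1-\varphi_\theta(z)/\varphi_\theta(|z|)|$, or use positivity of $\varphi_\theta$ on $[-1,1]$ together with a Rouché-type comparison with the $1/\theta$ factor.

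\emph{The coefficient $a_2$.} From $\log\varphi_\theta=p_1z+(p_2-p_1^2/2)z^2+\cdots$ with $p_1=1$ we get $a_2(\theta)=2p_2(\theta)-1$. Set $t=|\theta|\in(0,1)$. Then $p_2(\theta)=\pb[X_2\ge tX_1]$ and $p_2(1/\theta)=\pb[X_2\ge X_1/t]=\pb[X_1\ge X_2/t]=\pb[X_2\le tX_1]$, using exchangeability. Since there are no atoms, $p_2(\theta)+p_2(1/\theta)=1$, hence $a_2(\theta)=-a_2(1/\theta)$. Moreover $p_2(\theta)\ge\pb[X_2\ge X_1]=1/2$, and the inequality is strict because the law is non-degenerate on $\rl^+$. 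So $a_2(\theta)>0$.

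\emph{Never log-convex.} If $\{p_n(\theta)\}$ were log-convex with $p_0=1$, the classical fact (Kaluza/Horn) would give $a_n(\theta)\ge 0$ for all $n$. For $\theta<-1$ this contradicts $a_2(\theta)=-a_2(1/\theta)<0$. For $\theta\in(-1,0)$ I would extract a negative coefficient from the Van Dantzig identity: it forces $a_n(\theta)$ and $a_n(1/\theta)$ into a relation incompatible with both being nonnegative for all $n$. Failing that, a direct computation of $p_3p_1-p_2^2<0$ would do.

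\emph{Log-concavity of $q_n$.} For a log-convex density on $\rl^+$, I would rerun the proof of Theorem B: build the same sequence of conditional laws of $Z_n$ given survival and check stochastic orderings. Log-convexity, as opposed to log-concavity, reverses the monotone likelihood ratio, and the sign of $\theta$ reverses it once more. Together these give $q_{n+1}^2\ge q_nq_{n+2}$ for $n\ge 1$.
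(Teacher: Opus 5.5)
Your plan has two genuine gaps, one in the exponential representation and one in the non-log-convexity argument; the remaining steps are sound or need only minor repair.

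\textbf{Exponential representation.} Here the key step is left as an expectation, and the identity you anticipate is misstated. What is needed is the discrete Van Dantzig factorization \eqref{Dantzig}, $\varphi_\theta(z)\varphi_{1/\theta}(-z)=1$, valid when $X_+$ has no atoms. The paper takes this from Theorem 1.2 of \cite{ABKS} and does not reprove it. The identity involves $\varphi_{1/\theta}$, not $\tphi_{1/\theta}$ (for $\theta<0$ only the law of $X_+$ matters), and its right-hand side is $1$, not $1/(1-z)$.

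With it, the step you single out as the main obstacle is immediate on $|z|<1$, since $\varphi_{1/\theta}(-z)$ is finite there. Your fallbacks (bounding $|1-\varphi_\theta(z)/\varphi_\theta(|z|)|$, a Rouch\'e comparison) are not arguments as they stand. Once $\varphi_\theta$ is known to be zero-free on the closed disk, your conclusion is correct and more direct than the paper's appeal to Theorem 8.1 of \cite{LPS}: $R_\theta>1$ and compactness make $\log\varphi_\theta$ analytic on a larger disk, so $a_n(\theta)\to0$ geometrically.

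The unit circle, however, is covered only when $R_{1/\theta}>1$, i.e.\ when \eqref{Kons} also fails for $1/\theta$. Take $\theta<-1$, $\rho=1$ and $\mathrm{Supp}\,X\subset[c,C]$ with $C\le|\theta|c$. Then $p_n(\theta)=0$ for $n\ge2$, so $\varphi_\theta(z)=1+z$ vanishes at $-1$ and $a_n(\theta)=(-1)^{n-1}$. That case must be excluded from the statement, a point the paper's proof also passes over. For $R_\theta>1$ itself, the uniform bound you want follows at once from the monotonicity of $x\mapsto\pb_x[T_\theta>m]$ on $\rl^+$ (Remark~\ref{Surmu}~(b)); no continuity argument is needed.

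\textbf{Non-log-convexity.} Your argument fails for $\theta\in(-1,0)$, and $\theta=-1$ is not treated at all. Kaluza/Horn only turns log-convexity into $a_n(\theta)\ge0$, and there need be no negative coefficient to contradict. For the density $e^{-x}$ on $\rl^+$ one has $a_n(\theta)=(1-\theta)^{-n}(1-\theta^n)>0$ for all $n$ when $\theta\in(-1,0)$, and $a_n(-1)\ge0$. The relation $a_n(\theta)=(-1)^{n-1}a_n(1/\theta)$ coming from \eqref{Dantzig} is fully compatible with this. Your fallback fails on the same example: there $p_n(\theta)=(1-\theta)^{1-n}$ for $n\ge1$, which gives $p_3p_1-p_2^2=0$.

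The missing observation is elementary. With $\rho=1$ one has $p_0=p_1=1$, so log-convexity forces $p_2\ge p_1^2/p_0=1$, then $p_3\ge p_2^2/p_1=1$, and inductively $p_n\equiv1$, which is \eqref{Kons}. The paper reaches the same conclusion by playing supermultiplicativity against the submultiplicativity of Remark~\ref{Surmu}~(b).

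\textbf{What works, and one correction.} Your computation of $a_2$ is correct and more elementary than the paper's. Exchangeability and the absence of atoms give $p_2(\theta)+p_2(1/\theta)=1$, and $p_2(\theta)>1/2$ for $\theta\in(-1,0)$, without using \eqref{Dantzig}. Strictness holds because $\pb[tX_1\le X_2<X_1]=\frac12\pb[|\log X_2-\log X_1|\le\log(1/t)]>0$.

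In the log-concavity part, your count of reversals does not locate the sign change: two reversals acting on the same object cancel and would return Theorem B's conclusion. What actually happens is that the kernel $f(z-\theta x)=f(z+|\theta|x)$ is TP$_2$ in $(x,z)$ for log-convex $f$, so the $\nu_n$ stay stochastically increasing. The single reversal is that $h$ becomes non-increasing, because $\kappa$ now decreases while the function $\rho$ of \eqref{RRR} increases.
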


When the drift is negative, it is easy to see - see Remark \ref{Surmu} (b) below - that the tail random variable ${\hat T}_\theta$ is always well-defined except in the degenerate case $\eqref{Kons}.$ The above result shows that ${\hat T}_\theta$ is never ID for $\theta < -1.$ We conjecture that ${\hat T}_\theta$ is ID for all $\theta \in [-1,0[$ - see Remark \ref{Misc1} (e) below - but this fact still eludes us. The main tool to prove Theorem C is the notion of quasi-infinite divisibility introduced in \cite{LPS}, and which applies here because of the other factorization
\begin{equation}
\label{Dantzig}
\varphi_\theta(z)\, \varphi_{1/\theta} (-z)\, =\, 1
\end{equation}
which was obtained in Theorem 1.2 of \cite{ABKS} when $X_+$ has no atoms. This factorization can be viewed as a discrete Van Dantzig problem and we comment on this curious question in Paragraph 3.2 below. In the last part of the paper we give thorough details on the case where $X$ has a Laplace or bi-exponential distribution, whose density is log-concave on $\rl$ and log-convex on $\rl^+$ and hence provides a good illustration of Theorems B and C. The elementary case with negative drift sheds some interesting light on the quasi-infinite divisibility of ${\hat T}_\theta$ and the underlying Van Dantzig problem. In the more involved case with positive drift, revisiting a computation made in \cite{Larralde} in the case $\theta\in (0,1)$ via $q-$series, we express all the spectral parameters involved in the exponential representation of $\varphi_\theta$ via the discrete set of zeroes of a certain transcendental function, and we show that the Baxter-Spitzer factorization reads then as an additive factorization of the exponential law.

\section{The case with positive drift}

\subsection{Proof of Theorem A} It follows from \eqref{Connect} that the result amounts to 
$$\varphi_\theta(z)\tphi_{1/\theta}(z)\, =\, \frac{1}{1-z}\cdot$$
Fixing $\theta > 0$ and setting $r=1/\theta,$ we hence need to show that 
$$\sum_{k=0}^{n} p_{n-k}(\theta) \, \tp_k(r)\, = \,1$$
for all $n\ge 0.$ The cases $n=0,1$ are obvious and we fix $n\ge 2.$ The proof starts as that of Theorem 1.4 in \cite{ABKS} and we will only point out the differences. Setting $F$ for the distribution function of $X\sim\mu$ and $G$ for that of $-X,$ using the notation of \cite{ABKS} for $A_n$ and $A_n(u)$ and starting from (38) therein leads to
\begin{eqnarray*}
p_n(\theta)\, = \, \pb[A_n] & = & \int_0^\infty \pb[A_{n-1}]\, dF(u_1)\, + \, \int_{-\infty}^0 \mathbb{P}\lcr A_{n-1}(-\theta^{1-n} u_1)\rcr \, dF(u_1) \\
& = & p_{n-1}(\theta) \, (1- \tilde{p}_1(1/\theta))\, + \, \int_0^{\infty} \mathbb{P}\lcr A_{n-1}(r^{n-1} u_1)\rcr \, dG(u_1).
\end{eqnarray*}
Therefore, 
$$p_n(\theta)\, + \, p_{n-1}(\theta)\tp_1(r)\, = \, p_{n-1} (\theta) \, + \, \int_0^{\infty} \mathbb{P}\lcr A_{n-1}(r^{n-1} u_1)\rcr \, dG(u_1).$$ If $n = 2,$ we compute
$$\int_0^{\infty} \mathbb{P}\lcr A_1(r u_1)\rcr \, dG(u_1)\, =\, \int_0^{\infty} \int_{r u_1}^{\infty} dF(u_2) dG(u_1)\, =\, \pb[ X_1 < 0, X_2 + r X_1 > 0] \, = \tp_1(r) - \tp_2 (r),$$
which yields
$$p_2(\theta)\, + \, p_1(\theta)\tp_1(r)\, + \, \tp_2(r)\, = \, p_1(\theta)\, +\, \tp_1(r)\, =\, 1$$
as required. If $n\ge 3,$ the expressions of $I_{n-1}$ after (38) in \cite{ABKS} and the previous computation imply 
\begin{eqnarray*}
\int_0^{\infty} \mathbb{P}\lcr A_{n-1}(\theta^{1-n} u_1)\rcr  dG(u_1) 
& = & p_{n-2}(\theta) \lpa \tp_1(r) - \tp_2 (r) \rpa\\
& & \qquad\, +\, \int_0^{\infty}\!\! \int_{-r u_1}^{\infty} \mathbb{P}(A_{n-2}(r^{n-1}u_1 + r^{n-2}u_2)) dG(u_2) dG(u_1),
\end{eqnarray*}
which leads altogether to
$$\sum_{k=0}^2 p_{n-k}(\theta) \, \tp_k(r)\, = \, \sum_{k=0}^1 p_{n-1-k}(\theta) \, \tp_k(r)\, +\, \int_0^{\infty}\!\! \int_{-r u_1}^{\infty} \mathbb{P}\lcr A_{n-2}(r^{n-1}u_1 + r^{n-2}u_2)\rcr dG(u_2) dG(u_1).$$
Iterating, we obtain
\begin{eqnarray*} 
\sum_{k=0}^{n-1} p_{n-k}(\theta) \, \tp_k(r) &= &\sum_{k=0}^{n-2} p_{n-1-k}(\theta) \tp_k(r)\, +\\ 
& & \int_0^{\infty} \int_{-ru_1}^{\infty} \cdots \int_{-(r^{n-1}u_1+ \cdots+r u_{n-2})}^{\infty}\!\!\!\!\!\!\!\!\!\!\!\!\!\!\!\!\!\!\! \mathbb{P}\lcr A_1(r^{n-1}u_1+\cdots+ u_{n-1})\rcr dG(u_{n-1}) \ldots dG(u_1)\\
& = & \sum_{k=0}^{n-2} p_{n-1-k}(\theta) \tp_k(r)\, + \, \pb [\tZ_1 > 0, \ldots,\tZ_{n-1} > 0, \tZ_ n < 0], 
\end{eqnarray*}
which leads finally to
$$\sum_{k=0}^{n} p_{n-k}(\theta) \, \tp_k(r) \, = \, \sum_{k=0}^{n-1} p_{n-1-k}(\theta) \tp_k(r)\, = \, \cdots\, =\, p_1(\theta) \, +\, \tp_1(r) \, =\, 1$$
as required.
\qed

\subsection{Some remarks on supermultiplicativity} Before proving Theorem B, we give some supermultiplicative properties related to the log-convexity of $\{p_n(\theta)\}_{n\ge 0},$ and true without any assumption on $\mu.$  

\begin{Propo}
\label{Surmulti}
    For every $z \ge 0$ and $m,n \ge 1$, one has 
    \[\pb_z[T_\theta > n+m]\; \ge \;  p_m(\theta)\, \pb_z[T_\theta > n].\]
\end{Propo}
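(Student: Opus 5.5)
The plan is to apply the Markov property at time $n+1$ and then compare the chain started at an arbitrary point $x\ge 0$ with the chain started at $0$. This comparison is monotone because $\theta>0$ (we are in the positive drift section). Throughout, $\theta>0$ is fixed and $\pb_x$ denotes the law of $\{Z_k\}$ with $Z_0=x$.

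First I unwind the definition of $T_\theta$. Under $\pb_z$, the event $\{T_\theta>n\}$ is $\{Z_1\ge 0,\ldots,Z_{n+1}\ge 0\}$, and $\{T_\theta>n+m\}$ is $\{Z_1\ge0,\ldots,Z_{n+m+1}\ge 0\}$. The latter event is the intersection of $\{T_\theta>n\}$ with $\{Z_{n+2}\ge 0,\ldots,Z_{n+m+1}\ge0\}$. By the Markov property at time $n+1$,
$$\pb_z[T_\theta>n+m]\,=\,\esp_z\Big[\Un_{\{T_\theta>n\}}\, h_m(Z_{n+1})\Big],\qquad h_m(x)\,:=\,\pb_x[Z_1\ge0,\ldots,Z_m\ge 0].$$
On the event $\{T_\theta>n\}$ we have $Z_{n+1}\ge 0$. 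It therefore suffices to show that $h_m(x)\ge h_m(0)$ for every $x\ge 0$, since $h_m(0)=\pb[Z_1\ge0,\ldots,Z_m\ge0]=p_m(\theta)$ for $m\ge1$.

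For this monotonicity, I construct the chains started from $x$ and from $0$ with the same innovations $\{X_k\}$. Iterating \eqref{MCMC} gives $Z^x_k=\theta^k x+Z^0_k$ for all $k\ge 0$. Since $\theta>0$ and $x\ge0$, we get $Z^x_k\ge Z^0_k$ pathwise for every $k$. Hence $\{Z^0_1\ge0,\ldots,Z^0_m\ge0\}\subset\{Z^x_1\ge0,\ldots,Z^x_m\ge0\}$, which gives $h_m(x)\ge h_m(0)$. Plugging this into the display yields the claim $\pb_z[T_\theta>n+m]\ge p_m(\theta)\,\pb_z[T_\theta>n]$. No assumption on $\mu$ is needed.

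There is no real obstacle here. The only points needing care are the index bookkeeping caused by the shifted definition of $T_\theta$ (the identity $p_m(\theta)=\pb_0[T_\theta>m-1]$) and the fact that the pathwise comparison uses $\theta>0$ crucially. For $\theta<0$ the term $\theta^k x$ alternates in sign and the argument breaks down, consistent with Theorem C.
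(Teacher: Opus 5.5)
Your proof is correct and is essentially the paper's argument: the Markov property combined with the fact that $x\mapsto \pb_x[Z_1\ge 0,\ldots,Z_m\ge 0]$ is non-decreasing on $\rl^+$ when $\theta>0$. You justify this monotonicity with the explicit coupling $Z^x_k=\theta^k x+Z^0_k$, which the paper leaves as ``obvious by comparison''. Conditioning at time $n+1$ via $h_m(Z_{n+1})$ also gets the indices exactly right, whereas the paper's display conditions at time $n$ and writes $\pb_{Z_n}[T_\theta>m]$, which is slightly loose given the shifted definition of $T_\theta$.
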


\begin{proof}
Applying the simple Markov property a time $n$, we have
$$\pb_z[T_\theta > n+m]\, = \,\esp_z \left[ \Un_{\{T_\theta > n\}} \pb_{Z_n}[T_\theta > m] \right]\, \ge \, \,\esp_z \left[ \Un_{\{T_\theta > n\}} \pb_{0}[T_\theta > m] \right]\, =\, p_m(\theta)\, \pb_z[T_\theta > n]$$
where in the inequality we have used that $Z_n \ge 0$ on $\{T_\theta > n\}$ combined with the property, which is obvious by comparison, that the mapping
$$x\, \mapsto \, \pb_x[T_\theta > m]$$ 
is non-decreasing on $\rl^+$ because $\theta > 0$. 
\end{proof}

An immediate consequence is the following rough estimate, which recovers \eqref{Rough} for $z =0$ without any assumption on the innovation law $\mu.$ 

\begin{Coro}
\label{Surmultipli}
There exists $\lambda_\theta \in [0,1]$ such that: 
$$p_n(\theta)^{1/n} \, \to\, \lbd_\theta\qquad \mbox{as $n\to\infty.$}$$
\end{Coro}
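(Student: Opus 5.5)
The plan is to apply Fekete's lemma to the sequence $u_n = -\log p_n(\theta)$. Proposition \ref{Surmulti} with $z=0$ and $\pb_0[T_\theta > n] = p_n(\theta)$ gives the supermultiplicativity
$$p_{n+m}(\theta)\,\ge\, p_m(\theta)\,p_n(\theta), \qquad m,n\ge 1,$$
that is, $u_{n+m}\le u_n+u_m$. (We identify $\pb_0[T_\theta>n]$ with $p_n(\theta)$ as in the proof of the proposition, up to the harmless index shift built into the definition of $T_\theta$.)

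First dispose of the degenerate case in which $p_N(\theta)=0$ for some $N\ge 1$. Since $p_n(\theta)$ is non-increasing in $n$, we then have $p_n(\theta)=0$ for all $n\ge N$, so $p_n(\theta)^{1/n}\to 0$ and we may take $\lbd_\theta=0$.

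Otherwise $p_n(\theta)>0$ for all $n$, so $u_n\in[0,\infty)$ is a genuine subadditive sequence. Fekete's lemma then gives
$$\frac{u_n}{n}\,\to\,\inf_{n\ge1}\frac{u_n}{n}\,\in\,[0,\infty).$$
Hence $p_n(\theta)^{1/n}=e^{-u_n/n}\to\lbd_\theta:=\exp\big(-\inf_n u_n/n\big)=\sup_n p_n(\theta)^{1/n}$. This limit lies in $(0,1]$, because each $p_n(\theta)\le 1$ and the infimum is finite.

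The only step requiring any care is the degenerate case, where the logarithms are not finite. It is handled by the monotonicity of $n\mapsto p_n(\theta)$ as above; alternatively, one can run Fekete's lemma with values in $[0,\infty]$ directly.
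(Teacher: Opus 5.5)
Your proof is correct and follows the paper's argument: supermultiplicativity $p_{n+m}(\theta)\ge p_m(\theta)p_n(\theta)$ from Proposition \ref{Surmulti} at $z=0$, then Fekete's lemma applied to $-\log p_n(\theta)$. Your separate treatment of the case where some $p_N(\theta)=0$ is a detail the paper leaves implicit. By supermultiplicativity, that case occurs exactly when $p_1(\theta)=\pb[X\ge 0]=0$.
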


\begin{proof}
Applying Proposition \ref{Surmulti} with $z = 0$ yields
\begin{equation}
\label{Surm}
p_{n+m}(\theta)\; \ge \;  p_m(\theta)\,p_n(\theta)
\end{equation}
for all $n,m\ge 1$ and the result follows from Fekete's lemma.
\end{proof}

\begin{Rem}
\label{Surmu} {\em (a) If $z > 0$ it is not true in general that 
\[\pb_z[T_\theta > n+m]\; \ge \;  \pb_z[T_\theta > m]\, \pb_z[T_\theta > n]\]
for all $m, n \ge 1.$ For example, if $\mu$ is uniform on $[-1,1],$ then for all $\theta \in (1/2,1)$ one has
$$\pb_2[T_\theta > 2] \, < \, 1 - \frac{1}{4}\lpa 1 - \frac{2\theta^2}{1+\theta}\rpa^2 \, <\, 1 \, =\, \pb_2[T_\theta > 1]^2.$$
On the other hand, using the notion of positive association - see \cite{Esary}, it is possible to complete the statement of Proposition \ref{Surmulti} with the following inequality 
\[\pb_z[T_\theta > n+m]\; \ge \; \pb_z[T_\theta > n]\, \pb_z\lcr Z_{n+1}\ge 0, \ldots. Z_{n+m}\ge 0\rcr\]
which relies only on $\pb_z.$ Indeed, under $\pb_z$ one has the decomposition
\[Z_k \, = \, \theta^k z + \sum_{j=1}^k \theta^{k-j} X_j.
\]
for all $k\ge 1,$ showing that the vector $\mathbf{Z} = (Z_1, \ldots, Z_{n+m})$ is a non-decreasing transformation of the 
vector $\mathbf{X} = (X_1, \ldots, X_n)$. Since the latter is i.i.d. and hence positively associated - see Theorem 2.1 in \cite{Esary}, the property conveys to $\mathbf{Z}$ and we can apply the association inequality to the non-decreasing functions $f(x_1, \ldots, x_{n+m}) = \Un_{\{x_1\ge 0, \ldots, x_n\ge 0\}}$ and $f(x_1, \ldots, x_{n+m}) = \Un_{\{x_{n+1}\ge 0, \ldots, x_{n+m}\ge 0\}}.$

\medskip

(b) If $\theta < 0,$ the same argument shows that $p_{n+m}(\theta) \le p_m(\theta)p_n(\theta)$ for all $m,n\ge 0$ since then the mapping $x\mapsto\pb_x[T_\theta > m]$ is non-increasing. We hence have again
$$p_n(\theta)^{1/n} \, \to\, \lbd_\theta\qquad \mbox{as $n\to\infty.$}$$
for some $\lbd_\theta\in [0,1]$ by Fekete's lemma. Observe also that if \eqref{Kons} does not hold, then  
$$\lbd_\theta \, =\, \exp\lcr \inf_{n\ge 1} \lpa\frac{\log (p_n(\theta))}{n}\rpa\rcr\, <\, 1.$$}
\end{Rem}

\subsection{Proof of Theorem B} We first observe that by log-concavity, the support of the density $f$ is an interval whose interior contains zero since $\pb[X > 0]\pb[X <0] > 0.$ This shows that the quantities 
$$q_n (\theta) \, = \, \pb[Z_1 \ge 0, \ldots, Z_n \ge 0, Z_{n+1} < 0]$$ 
are strictly positive for all $n \ge 0$. We need to show that the sequence  
$$n\, \mapsto\, r_n(\theta)\, =\, \frac{q_{n+1}(\theta)}{q_n(\theta)}$$
is non-decreasing. Since we are dealing with non strict inequalities, by approximation we may and will suppose that the density $f$ is positive on the whole $\rl.$ Introduce the functions
\begin{equation}
\label{KKK}
\kappa(x)\, =\, \int_0^\infty f(z-\theta x) \left( \int_{-\infty}^0 f(s-\theta z) \, ds \right) dz
\end{equation}
and
$$h(x)\, =\, \frac{1}{\kappa(x)} \int_0^\infty f(z-\theta x) \, \kappa(z) \, dz,$$
which are well-defined on $\rl $ by the log-concavity and positivity assumptions on $f.$ Observe that 
$$\kappa(x)\, =\, \pb_x\lcr Z_1 \ge 0, \, Z_2 < 0\rcr,$$ 
so that in particular, $q_1(\theta) = \kappa (0).$ 

\begin{Lemm}
\label{lem:g_non_decreasing}
The function $h$ is non-decreasing on $\mathbb{R}^+$.    
\end{Lemm}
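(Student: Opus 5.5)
The plan is to write $h$ as the expectation of a non-decreasing function under a stochastically increasing family of probability measures. Set $g(z) = \int_{-\infty}^0 f(s-\theta z)\,ds = \pb[X < -\theta z]$, so that $\kappa(x) = \int_0^\infty f(z-\theta x)\, g(z)\, dz$. For each $x\ge 0$, introduce the probability density
$$w_x(z)\, =\, \frac{f(z-\theta x)\, g(z)}{\kappa(x)}\,\Un_{\{z\ge 0\}}.$$
Then
$$h(x)\, =\, \int_0^\infty w_x(z)\, \frac{\kappa(z)}{g(z)}\, dz \, =\, \esp\lcr \phi(Y_x)\rcr,$$
where $\phi = \kappa/g$ and $Y_x$ has density $w_x$. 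All quantities are positive and finite because we have reduced to $f>0$ on $\rl$. It then suffices to prove two facts: (i) $x\mapsto Y_x$ is stochastically non-decreasing, and (ii) $\phi$ is non-decreasing on $\rl^+$.

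For (i), I will use the likelihood ratio order. For $x' > x$, the ratio $w_{x'}(z)/w_x(z)$ is a constant multiple of $f(z-\theta x')/f(z-\theta x)$. This ratio is non-decreasing in $z$ because $\log f$ is concave and $\theta x' > \theta x$ (here $\theta>0$ is used): the increments of a concave function over an interval of fixed length $\theta(x'-x)$ are non-increasing as the interval moves right. A monotone likelihood ratio implies stochastic ordering, so $Y_x \le_{st} Y_{x'}$.

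For (ii), which I expect to be the main step, write
$$\phi(z)\, =\, \int_0^\infty \frac{f(u-\theta z)}{F(-\theta z)}\, g(u)\, du,$$
where $F$ is the distribution function of $X$. I will show that, for each fixed $u\ge 0$, the integrand is non-decreasing in $t = \theta z$. Its logarithmic derivative in $t$ is
$$-\frac{f'}{f}(u-t)\, +\, \frac{f}{F}(-t).$$
The key inequality is $f(y)/F(y) \ge (\log f)'(y)$ for a log-concave $f$. It holds trivially when $(\log f)'(y)\le 0$. When $(\log f)'(y) > 0$, integrate the tangent bound $f(s)\le f(y)\, e^{(\log f)'(y)(s-y)}$ over $s<y$ to get $F(y) \le f(y)/(\log f)'(y)$. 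Applying this at $y=-t$ bounds the derivative below by $(\log f)'(-t) - (\log f)'(u-t)$. This is non-negative because $(\log f)'$ is non-increasing and $u-t\ge -t$. If $f$ is not differentiable, I will either work with one-sided derivatives or first smooth $f$ by convolution with a Gaussian, which preserves log-concavity, and then pass to the limit. Hence $\phi$ is non-decreasing, in fact on all of $\rl$.

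Combining (i) and (ii), $h(x) = \esp[\phi(Y_x)]$ is non-decreasing on $\rl^+$. The only delicate points are the regularity and approximation details in (ii). The rest is the standard fact that a monotone likelihood ratio implies stochastic dominance.
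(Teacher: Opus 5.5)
Your proof is correct and follows the paper's argument. The paper also writes $h(x)=\frac{\int_0^\infty f(z-\theta x)\kappa(z)\,dz}{\int_0^\infty f(z-\theta x)\rho(z)\,dz}$ (your $g$ is its $\rho$), proves that $\kappa/\rho$ is non-decreasing, and then symmetrizes over $\{0<w<z\}$, which is exactly your monotone-likelihood-ratio/stochastic-order step. The only difference is your step (ii): the paper avoids derivatives and smoothing by integrating the two-point log-concavity inequality $f(t-\theta z_2)f(s-\theta z_1)\ge f(t-\theta z_1)f(s-\theta z_2)$ for $s\le 0\le t$ and $z_1<z_2$. That inequality also gives your pointwise monotonicity of $t\mapsto f(u-t)/F(-t)$ at once, since $F(-t)/f(u-t)=\int_{-\infty}^0 f(s-t)/f(u-t)\,ds$.
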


\begin{proof}
We set
\begin{equation}
\label{RRR}
\rho(z) \, =\, \int_{-\infty}^0 f(s-\theta z) \, ds
\end{equation}
and first observe that the mapping
$$z\, \mapsto\, \frac{\kappa(z)}{\rho (z)}$$
is non-decreasing on $\rl^+.$ For all $0 < z_1 < z_2$ we have indeed, by Fubini's theorem, 
$$\kappa(z_2) \rho(z_1)\, - \,\kappa(z_1) \rho(z_2)\, = \,\int_0^\infty dt \, \rho(t) \lpa \int_{-\infty}^0 \lpa f(t-\theta z_2) f(s-\theta z_1) - f(t-\theta z_1) f(s-\theta z_2) \rpa ds\rpa$$
and the right-hand side is non-negative because
$$f(t-\theta z_2) f(s - \theta z_1)\, \ge\, f(t - \theta z_1) f(s - \theta z_2)$$
for all $s\le 0\le t$ by the log-concavity of $f.$ Now we can rewrite
$$ h(x)\, =\, \frac{\int_0^\infty f(z-\theta x) \kappa(z) \, dz}{\int_0^\infty f(z-\theta x) \rho(z) \, dz}$$
and again by Fubini's theorem the non-decreasing character of $h$ on $\rl^+$ amounts to
$$\int_0^\infty \int_0^\infty \Big( f(z-\theta x_2) f(w-\theta x_1) \kappa(z) \rho(w) - f(z-\theta x_1) f(w-\theta x_2) \rho(z) \kappa(w)\Big)\, dw dz\,  \ge \, 0$$
for all $0 < x_1 < x_2.$ But the latter is equivalent to
$$\int_{0 < w < z <\infty} \Big( f(z-\theta x_2) f(w-\theta x_1) - f(z-\theta x_1) f(w-\theta x_2) \Big)\lpa\kappa(z) \rho(w) - \rho(z) \kappa(w)\rpa dw dz\,  \ge \, 0,$$
which is true by the log-concavity of $f.$
\end{proof}

\begin{Rem}
\label{lem:growth_integral_ratio}
{\em The above proof shows that if $\rho_1, \rho_2$ are two positive functions on $\rl^+$ whose ratio $\rho_1/\rho_2$ is non-decreasing, the function
$$x\, \mapsto\, \frac{\int_0^\infty f(z-\theta x) \rho_1(z) \, dz}{\int_0^\infty f(z-\theta x) \rho_2(z) \, dz}$$
is also non-decreasing on $\rl^+,$ a fact which will be used subsequently.}
\end{Rem}

We will now express the involved ratio $r_n(\theta)$ as a certain integral of the function $h.$ Setting 
$$A_n\, =\, \{Z_1 \ge 0, \ldots, Z_n\ge 0\}$$
for all $n\ge 1,$ introduce a family of probability measures $\{\nu_n\}_{n\ge 0}$ on $\rl^+$ defined by
$$\nu_n (A)\, =\,  \frac{\mathbb{E}\lcr\Un_{A_n} \kappa(Z_n)  \Un_{\{Z_n \in A\}} \rcr}{\mathbb{E}\lcr\Un_{A_n}\kappa(Z_n)\rcr}$$
for all Borelian sets $A.$ Observe that $\nu_0 =\delta_0$ is the Dirac mass at zero.

\begin{Lemm}
    \label{pro:expression_ratio_qn}
For every $n \ge 1$ one has 
$$r_n(\theta) \, = \,\int_0^\infty h(y)\, \nu_{n-1}(dy).$$
\end{Lemm}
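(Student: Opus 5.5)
The plan is to express both $q_n(\theta)$ and $q_{n+1}(\theta)$ through the Markov property at time $n-1$, using the functions $\kappa$ and $h$. First I will check that for every $n\ge 1$
$$q_n(\theta)\, =\, \pb[A_{n-1},\, Z_n\ge 0,\, Z_{n+1}<0]\, =\, \esp\lcr \Un_{A_{n-1}}\, \kappa(Z_{n-1})\rcr,$$
with the convention $A_0=\Omega$. This follows from the simple Markov property at time $n-1$ and the identification $\kappa(x)=\pb_x[Z_1\ge 0, Z_2<0]$. For $n=1$ it reduces to $q_1(\theta)=\kappa(0)$, since $Z_0=0$.

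Next I will compute $q_{n+1}(\theta)$ by conditioning on $\cal F_{n-1}$, the $\sigma$-field of $X_1,\ldots,X_{n-1}$. On $A_{n-1}$, with $Z_{n-1}=x$, the remaining event $\{Z_n\ge 0, Z_{n+1}\ge 0, Z_{n+2}<0\}$ has probability
$$\pb_x[Z_1\ge 0, Z_2\ge 0, Z_3<0]\, =\, \int_0^\infty f(z-\theta x)\, \pb_z[Z_1\ge 0, Z_2<0]\, dz\, =\, \int_0^\infty f(z-\theta x)\,\kappa(z)\,dz\, =\, \kappa(x)\,h(x).$$
Here I apply the Markov property once more at the first step: the density of $Z_1$ under $\pb_x$ is $z\mapsto f(z-\theta x)$. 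Hence
$$q_{n+1}(\theta)\, =\, \esp\lcr \Un_{A_{n-1}}\,\kappa(Z_{n-1})\, h(Z_{n-1})\rcr.$$

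Dividing the two expressions and using the definition of $\nu_{n-1}$ gives
$$r_n(\theta)\, =\, \frac{\esp[\Un_{A_{n-1}}\kappa(Z_{n-1})h(Z_{n-1})]}{\esp[\Un_{A_{n-1}}\kappa(Z_{n-1})]}\, =\, \int_0^\infty h(y)\,\nu_{n-1}(dy),$$
since $Z_{n-1}\ge 0$ on $A_{n-1}$. For $n=1$ the measure is $\nu_0=\delta_0$, and the formula reads $r_1=h(0)$.

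The main obstacle is only bookkeeping, and I see two points to check. First, $\kappa$ and $h$ must be finite and positive so that the ratios make sense. This follows from the positivity of $f$, assumed by approximation, together with $q_n(\theta)>0$. Second, the indexing must match: $\nu_{n-1}$ is built from $A_{n-1}$ and $Z_{n-1}$. I will also justify the Fubini and Markov steps, which are routine because all integrands are nonnegative.
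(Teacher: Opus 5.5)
Your proposal is correct and is essentially the paper's proof. Both use the Markov property at time $n-1$ to write $q_n(\theta)=\esp[\Un_{A_{n-1}}\kappa(Z_{n-1})]$ and $q_{n+1}(\theta)=\esp[\Un_{A_{n-1}}\kappa(Z_{n-1})h(Z_{n-1})]$, then divide. The only cosmetic difference is that the paper first writes $q_{n+1}(\theta)=\esp[\Un_{A_n}\kappa(Z_n)]$ and then integrates out $X_n$, whereas you compute $\pb_x[Z_1\ge 0,Z_2\ge 0,Z_3<0]=\kappa(x)h(x)$ directly, which is the same computation.
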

\begin{proof}
Fix $n\ge 1.$ By the Markov property, we have 
$$q_n (\theta)\, = \, \mathbb{E}\lcr \Un_{A_{n-1}} \mathbb{P}_{Z_{n-1}}\lcr Z_1 \ge 0, Z_2 < 0\rcr\rcr\, =\, \mathbb{E}\lcr \Un_{A_{n-1}} \kappa(Z_{n-1})\rcr$$
together with
\begin{eqnarray*}
q_{n+1}(\theta)\, = \, \mathbb{E}\lcr \Un_{A_{n}} \kappa(Z_{n})\rcr & = & \esp \lcr \lpa\frac{\kappa (X_n + \theta Z_{n-1})}{\kappa (Z_{n-1})}\Un_{\{X_n \ge -\theta Z_{n-1}\}} \rpa \Un_{A_{n-1}} \kappa(Z_{n-1})\rcr\\
& = & \esp \lcr \lpa\frac{1}{\kappa (Z_{n-1})}\int_0^\infty f(z- \theta Z_{n-1}) \kappa(z)\, dz \rpa \Un_{A_{n-1}} \kappa(Z_{n-1})\rcr\\
& = & \mathbb{E}\lcr \Un_{A_{n-1}} \kappa(Z_{n-1}) h(Z_{n-1})\rcr,
\end{eqnarray*}
which shows the result.
\end{proof}

We will now show that
\begin{equation}
\label{nuprec}
\nu_n\, \prec_{st} \, \nu_{n+1}
\end{equation}
for all $n\ge 0,$ where $\prec_{st}$ stands for the usual stochastic order. This is obvious for $n = 0$ since $\nu_0 =\delta_0,$ and for $n\ge 1$ this amounts to
$$\frac{\mathbb{E}\lcr\Un_{A_{n-1}} \kappa(Z_n)\Un_{\{Z_n \ge x\}}\rcr}{\mathbb{E}\lcr \Un_{A_n}\kappa(Z_n)\rcr}\, \le\, \frac{\mathbb{E}\lcr\Un_{A_n} \kappa(Z_{n+1})\Un_{\{Z_{n+1} \ge x\}}\rcr}{\mathbb{E}\lcr\Un_{A_{n+1}}\kappa(Z_{n+1})\rcr}\, =\, \frac{\int_0^\infty f(y) \,\mathbb{E}_y\lcr\Un_{A_{n-1}}\kappa(Z_{n})\Un_{Z_{n} \ge x}\rcr\, dy}{\int_0^\infty f(y) \,\mathbb{E}_y\lcr\Un_{A_n} \kappa(Z_{n})\rcr \, dy} $$
for all $x\ge 0.$ Therefore, it suffices to show that for all $n \ge 1$, the function:
    \[h_n : x \,\mapsto\, h_n(x)\, =\, \frac{\int_0^\infty f(y) \,\mathbb{E}_y\lcr\Un_{A_{n-1}}\kappa(Z_{n})\Un_{Z_{n} \ge x}\rcr\, dy}{\mathbb{E}\lcr\Un_{A_{n-1}} \kappa(Z_n)\Un_{\{Z_n \ge x\}}\rcr}   \]
    is non-decreasing on $\rl^+$. For $n = 1,$ one has by Fubini's theorem  
    \[ h_1(x)\, =\, \frac{\int_0^\infty f(y) \,\mathbb{E}_y\lcr\kappa(Z_{1})\Un_{Z_{1} \ge x}\rcr\, dy}{\mathbb{E}\lcr\kappa(Z_1)\Un_{\{Z_1 \ge x\}}\rcr}\, = \,\frac{\int_x^\infty \kappa(z) \left(\int_0^\infty f(y) f(z-\theta y) \, dy \right)\, dz}{\int_x^\infty \kappa(z) f(z) \, dz}\]
and the derivative in $x$ of the function on the right-hand side is non-negative on $\rl^+$ since
$$\kappa(x) \int_0^\infty f(y) \lpa \int_x^\infty \kappa(z) \lpa f(x) f(z-\theta y) - f(z) f(x-\theta y)\rpa dz\rpa  dy \,\ge 0$$
by the log-concavity of $f$. For $n \ge 2,$ one has
$$h_n(x)\, = \, \frac{\int_x^\infty \kappa(z) f_{n+1}(z) \, dz}{\int_x^\infty \kappa(z) f_n(z) \, dz}$$
with
$$f_n(z) \, = \, \int_0^\infty \!\!\cdots\!\! \int_0^\infty f(x_1) f(x_2 - \theta x_1)\,\cdots\, f(z-\theta x_{n-1}) \, dx_1\ldots dx_{n-1}.$$
Differentiating, we need to show that
    \[\kappa (x) \int_x^\infty \kappa(z) \lpa f_n(x) f_{n+1}(z) - f_{n+1}(x)f_n(z) \rpa \, dz \, \ge \,0,\]
    which is a consequence of the non-decreasing character of the function 
\begin{equation}
\label{Eqfn}
\frac{f_{n+1}(x)}{f_n(x)}\, =\, \frac{\int_0^\infty f(x-\theta z) f_n(z) \, dz}{\int_0^\infty f(x-\theta z) f_{n-1}(z) \, dz}
\end{equation}
on $\rl^+$, where we have set $f_1(x) = f(x).$ Finally, since again by the log-concavity of $f$ the mapping
    \[x\, \mapsto\, \frac{f_2 (x)}{f_1 (x)}\, = \, \int_0^\infty \left( \frac{f(x-\theta y)}{f(x)}\right) f(y) \, dy \]
is non-decreasing on $\rl^+,$ an induction based on \eqref{Eqfn} and a repeated use of Remark \ref{lem:growth_integral_ratio} completes the proof of \eqref{nuprec}. 

Combining Lemmas \ref{lem:g_non_decreasing}, Lemma \ref{pro:expression_ratio_qn} and \eqref{nuprec} shows that $r_{n+1}(\theta)\ge r_n(\theta)$ for all $n\ge 1$ and it remains to prove that $r_1(\theta)\ge r_0(\theta)$ in order to finish the proof of Theorem B. We compute 
\begin{equation}
\label{Caro}
r_0 (\theta)\, = \,\frac{\kappa(0)}{\rho (0)} \qquad \text{and} \qquad r_1(\theta)\, = \,\frac{\int_0^\infty f(z) \,\kappa(z) \, dz}{\int_0^\infty f(z) \rho(z) \, dz}
\end{equation}
with the above notation, and the result follows since $z\mapsto \kappa(z)/\rho(z)$ is non-decreasing, as shown during the proof of Lemma \ref{lem:g_non_decreasing}.

\qed

\begin{Rem} 

\label{BID}{\em (a) As mentioned in the introduction, the log-convexity of $\{q_n(\theta)\}_{n\ge 0}$ implies that of $\{p_n(\theta)\}_{n\ge 0},$ which is a refinement of \eqref{Surm}. Since the latter holds without the log-concavity assumption on the innovations, one may ask if this is not also true for the log-convexity. However, the above proof strongly depends on the log-concavity assumption on $f.$ 

\medskip

(b) If $\pb[X < 0]\pb[X > 0] > 0,$ then $p_1(\theta) > 0$ and Fekete's lemma shows that the constant $\lbd_\theta$ in Corollary  \ref{Surmultipli} is positive. If moreover $X$ has a log-concave density $f$, then Theorem B implies that the sequence $\{\lambda_\theta^{-n} p_n (\theta)\}_{n\ge 0}$ is log-convex and hence ultimately monotone and hence converges to some constant $c_\theta\in [0,\infty].$ Let us now discuss the positivity and finiteness of $c_\theta.$ If $\theta\in (0,1)$ and $f$ is furthermore positive on $\rl,$ then Theorem 10 in \cite{HKW} shows that $\lbd_\theta < 1$ and $c_\theta\in (0,\infty)$ - see also Theorem 1 in \cite{AK247} for a related result in the case of Gaussian innovations. If $\theta > 1,$ then \eqref{Duelle} implies by the same argument as in Proposition 4.5. of \cite{ABKS} that $\lbd_\theta = 1$ and that 
$$p_n(\theta)\,\to\, c_\theta \, =\, \frac{1}{\esp[T_{1/\theta}]}\, \in\, (0,1).$$
Recall finally that if $\theta =1,$ then $p_n(\theta) \sim \kappa n^{-1/2}$ as $n\to\infty$ for some $\kappa >0,$ since $\{Z_n\}$ is a random walk in the Gaussian domain of attraction, so that $c_1 = 0.$

\medskip

(c) For a given ID probability distribution $\{u_n\}$, some sufficient conditions for its log-convexity had been given in Theorem 2 of \cite{Hansen} in terms of the log-convexity of the corresponding sequence $\{a_n\}$ and the initial condition $a_1^2 \le a_2.$ The latter amounts to $u_1^2 \le u_2$ which is always satisfied by $u_n = p_n(\theta)$ in view of \eqref{Surm}. In this respect, one may ask if under the log-concavity assumption on $f$, the sequence of weights $\{a_n(\theta\}$ is not also log-convex. } 
\end{Rem}

\section{The case with negative drift}

\subsection{Proof of Theorem C} As mentioned in the introduction, we can suppose $\rho = 1.$ Since \eqref{Kons} does not hold, we know by Remark \ref{Surmu} (b) that the entire series $\varphi_\theta(z)$ has a radius of convergence $R_\theta > 1.$ Since $X$ has no atoms on $\rl^+,$ the factorization \eqref{Dantzig} holds true and is valid for all complex $z$ with $\vert z\vert < \min(R_\theta, R_{1/\theta}).$ If we consider the tail random variable ${\hat T}_\theta$ as defined in the introduction, then \eqref{Dantzig} implies that its characteristic function $t\mapsto \esp [e^{\ii t {\hat T}}]$ has no real zeroes and this shows by Theorem 8.1. in \cite{LPS} that ${\hat T}$ is quasi-infinitely divisible and that we have the required representation
\begin{equation}
\label{ExpR}
\varphi_\theta (z) \, =\, \exp \lcr \sum_{n\ge 1}\, \frac{a_n(\theta)}{n} \, z^n\rcr
\end{equation}
for some real sequence $\{a_n(\theta)\}_{n\ge 1}$, valid for all complex $z$ such that $\vert z\vert < R_\theta.$ Moreover, since $R_\theta > 1$ there exists $z > 1$ such that the positive series $\varphi_\theta (z)$ converges and this clearly implies $a_n(\theta)\to 0$ as $n\to \infty.$ One has $a_1(\theta) = a_1(1/\theta) = \pb[X > 0]$ and, if $\theta\in (-1,0),$ 
$$a_2(\theta) \, =\, \frac{a_2(\theta)\, -\,a_2(1/\theta)}{2}\, =\, p_2(\theta)\, - \, p_2(1/\theta)\, =\, 2\,\int_0^\infty dF(x) \lpa\int_{-\theta x}^{-x/\theta} dF(y)\rpa\, > \, 0,$$
where the first equality comes from \eqref{Dantzig}. This entails $a_2(\theta) = -a_2(1/\theta) > 0$ as required.

\medskip

The fact that $\{p_n(\theta)\}_{n\ge 0}$ is never log-convex is another consequence of Remark \ref{Surmu} (b) since the log-convexity would imply $p_{m+n}(\theta) = p_m(\theta)p_n(\theta)$ for all $m,n\ge 0$ and hence \eqref{Kons} since $p_1(\theta) = \rho =1,$ which is excluded by assumption. We finally show that the sequence
$$r_n(\theta)\, =\, \frac{q_{n+1}(\theta)}{q_n(\theta)}$$
is non-increasing for all $n\ge 1$ whenever $F$ has a log-convex density $f.$ Clearly, this property entails that $f$ is also positive and decreasing on $\rl^+,$ which implies that the function $\rho$ in \eqref{RRR} is positive increasing on $(0,\infty)$ whereas the function $\kappa$ in \eqref{KKK} is positive decreasing on $(0,\infty).$ Applying \eqref{Caro}, we first obtain $r_2(\theta) < r_1(\theta).$ Moreover, since $\theta < 0$ a perusal of the proofs of Lemmas \ref{lem:g_non_decreasing} and \ref{pro:expression_ratio_qn} shows for all $n\ge 1$ we have the same representation
$$r_{n+1}(\theta) \, = \,\int_0^\infty h(y)\, \nu_{n-1}(dy)$$
with $h$ non-increasing on $\rl^+$ and $\nu_{n-1}\prec_{st}\nu_n.$ This completes the argument.
\qed

\begin{Rem}
\label{Misc1}
{\em (a) If \eqref{Kons} holds, we have $\varphi_\theta(z)=1/(1-z)$ for all $z\in (-1,1)$ and $a_n(\theta) = 1$ for all $n\ge 1.$ As mentioned in the introduction, this happens if and only if $\theta\ge -1$ and Supp $X\,\subset [c, C]$ for some $c,C\ge 0$ with $c+C\theta\ge 0.$ If the condition holds, one has indeed
$$p_n(\theta) \, \ge \, \pb[X_n \ge (-\theta)X_{n-1}\ge \ldots \ge (-\theta)^{n-1} X_1 \ge 0]\, =\, 1$$ 
for all $n\ge 1,$ whereas if the condition fails one has 
$$p_2(\theta) \, \le \, \pb[X_2\ge -\theta X_1]\, <\, 1.$$
Observe also that if \eqref{Kons} holds, then $\varphi_{1/\theta}(z)=1+z$ for all $z\in (-1,1),$ so that ${\hat T}_{1/\theta}$ is Bernoulli with parameter 1/2, and $a_n(1/\theta) = (-1)^{n-1}$ for all $n\ge 1.$

\medskip

(b) The fact that $\{p_n(\theta)\}_{n\ge 0}$ is never log-convex can also be seen without Remark \ref{Surmu} (b) as a consequence of Kaluza's theorem. If indeed $\{p_n(\theta)\}_{n\ge 0}$ were log-convex, then a combination of \eqref{Dantzig} and Satz 3 in \cite{Kaluza} would show that there exists some non-negative sequence $\{b_n(\theta)\}_{n\ge 1}$ such that 
$$\varphi_{1/\theta} (-z)\, = \, \frac{1}{\varphi_\theta(z)}\, =\, 1\, -\, \sum_{n\ge 1} b_n(\theta) z^n$$
for all $z\in [-1,1].$ Hence, we would have $p_2(1/\theta) \le 0$ and $p_n(1/\theta) = 0$ for all $n\ge 2,$ which would imply $\varphi_{1/\theta}(z) = 1 + z$ and, using again \eqref{Dantzig}, that $p_n(\theta) = 1$ for all $n\ge 0.$ 

\medskip

(c) If $X$ has a log-convex density on $\rl^+,$ then Theorem C shows by \eqref{LCLC} that the sequence $\{p_n(\theta)\}_{n\ge 0}$ is also log-concave. Observe that here, Supp $X_+$ contains 0 and hence $\{p_n(\theta)\}_{n\ge 0}$ is not log-convex and hence not geometric. Observe also that \eqref{Dantzig} and Theorem 8.1.2 in \cite{Karlin} with $r = 2$ imply for all $\theta < 0$ that if $X$ has no atoms on $\rl^+,$ then
$$\{p_n(\theta)\}_{n\ge 0} \quad\mbox{is log-concave}\quad\Longleftrightarrow\quad \{p_n(1/\theta)\}_{n\ge 0} \quad\mbox{is log-concave.}$$

\medskip

(d) If $X$ has no atoms, the above result shows that the sequence $\{a_n(\theta)\}_{n\ge 1}$ appearing in \eqref{ExpR} takes negative values for $\theta < -1,$ so that the tail random variable ${\hat T}_\theta$ is quasi-infinitely divisible but never infinitely divisible. This fact can be illustrated more precisely in two explicit examples. 

\begin{itemize}

\item If $X_+$ has density $\lbd a^{-1}\Un_{[0,a]}(x)$ for some $a>0$ and $\lbd \in(0, 1),$ then the proof of Proposition 5.6 in \cite{ABKS} shows that 
$$a_n(\theta) \, =\, \frac{(-1)^{n-1}\, \lbd^n\, J_n(1/\theta) \, (1+ \cdots + \theta^{1-n})}{(n-1)!}$$
for all $\theta < -1$ and $n\ge 1,$ where $J_n$ is the $n$-th Mallows-Riordan polynomial. Since $J_n(1/\theta) > 0$ for all $\theta < -1$ and $n \ge 1$ - see e.g. Formula (3) in \cite{ABKS} for an explanation, the sequence $\{a_n(\theta)\}_{n\ge 1}$ strictly alternates, starting positive. For $\theta\in (-1,0),$ this implies that
$$a_n(\theta) \, =\, (-1)^{n-1} a_n(1/\theta)\, =\, \frac{\lbd^n\, J_n(\theta) \, (1+ \cdots + \theta^{n-1})}{(n-1)!}$$
is a positive sequence.

\item If $X_+$ has density $\lbd b e^{-bx}$ for some $a>0$ and $\lbd \in(0,1),$ then we will show in Paragraph \ref{BiExpM} below that 
$$a_n(\theta) \, =\, \lpa\frac{\lbd}{1-\theta}\rpa^n \Big( 1- \theta^n\Big),$$
which is positive if $\theta \in (-1,0)$ and alternates if $\theta <-1.$ 
\end{itemize} 
  
\medskip

(e) In case $\rho =1,$ we have $p_0(\theta) = p_1(\theta) = a_1(\theta) = 1.$ For all $n\ge 1,$ we also have the relationship 
$$(n+1)p_{n+1} (\theta) \, =\, \sum_{q=0}^n p_q(\theta) a_{n+1-q} (\theta)$$
which is obtained after taking the logarithmic derivative of \eqref{ExpR}. If we set $r_n(\theta) = p_n(\theta) - p_n(1/\theta),$ we have 
$$a_2(\theta)\, =\,r_2(\theta)\, = \,r_3(\theta)\, >\, 0 \qquad\mbox{and}\qquad a_4(\theta) = 2 r_4(\theta) \, -\, r_2(\theta)$$ for all $\theta \in (-1,0).$ The latter formula seems to imply that $a_4(\theta) > 0$ for all $\theta \in (-1,0),$ but we were unable to prove this. More generally, one may ask if $a_{2n}(\theta) > 0$ for all $\theta\in (-1,0)$ and $a_{2n+1}(\theta) \ge 0$ for all $\theta < 0.$ This would prove that ${\hat T}_\theta$ is infinitely divisible if and only if $\theta \ge -1.$}

\end{Rem} 

\subsection{On a discrete Van Dantzig problem} In this paragraph we give a few remarks to the following problem on probability generating functions which is motivated by \eqref{Dantzig}. Let $\{s_n\}_{n\ge 0}$ and $\{t_n\}_{n\ge 0}$ be two probability mass functions on $\NN$ and
$$f(z)\, =\, \sum_{n\ge 0} s_n \, z^n\qquad\mbox{and}\qquad g(z)\, =\, \sum_{n\ge 0} t_n \, z^n$$
be their generating series. We will say that $(f, g)$ is a discrete Van Dantzig (DVD) pair if the following identity holds
\begin{equation}
\label{VDM}
f(z)\,g(-z)\, =\, f(0)g(0)
\end{equation}
for all $z\in[-1,1].$ In case $f = g,$ we will say that $f$ is a self-reciprocal Van Dantzig generating function if $f(z)f(-z) = f(0)^2.$ The terminology comes from a famous problem on characteristic functions which was raised by D.~Van Dantzig: a pair $(f,g)$ of characteristic functions on $\rl$ is called a Van Dantzig pair if
$$f(t) g(\ii t)\, =\, 1$$
for all $t$ in some open neighbourhood of zero. Standard examples are the couples 
$$\lpa e^{-t^2/2}, e^{-t^2/2}\rpa, \qquad \lpa \cos t, \frac{1}{\cosh t}\rpa\qquad \mbox{and} \qquad\lpa\frac{\sin t}{t}, \frac{t}{\sinh t}\rpa.$$ 
Further examples and properties were studied in \cite{Luke}, and also later in \cite{RY} in the framework of infinitely divisible (ID) Wald couples. 

In the discrete framework, it is clear that the generating series of a probability mass function $\{s_n\}_{n\ge 0}$ on $\NN$ belongs to a DVD pair if and only if the (unique) solution $\{u_n\}_{n\ge 0}$ to the triangular array of equations 
$$u_0\, =\, 1 \qquad\mbox{and}\qquad\sum_{k=0}^{n} (-1)^k \, s_k\, u_{n-k}\, = \, 0, \qquad n\ge 1,$$
is a non-negative sequence. However, checking the latter condition may be non-trivial. Observe that necessarily, one must have $s_0 > 0$ since in \eqref{VDM} one has $g(z) > 0$ for all $z\in [0,1]$ and if one had $s_0=f(0)=0$ then the function $f(-z)$ would vanish on the whole $[0,1],$ which is impossible by analyticity. This shows, again by analyticity, that the identity
$$f(z)\,g(-z)\, =\, f(0)g(0)\, >\, 0$$
holds on the closed unit disk. In particular, Theorem 8.1. in \cite{LPS} shows that $\{s_n\}_{n\ge 0}$ must be quasi-ID, in other words for all $z$ in the closed unit disk one has
\begin{equation}
\label{QID}
f(z)\, =\, f(0)\, \exp\lcr \sum_{n\ge 1} \frac{a_n}{n}\, z^n\rcr
\end{equation}
for some real sequence $\{a_n\}_{n\ge 1}.$ In the self-reciprocal case, this yields immediately the following characterization.

\begin{Propo}
\label{SDVD}
A probability generating function $\varphi(z)$ on $\NN$ is self-reciprocal {\em DVD} if and only if there exists a real sequence $\{b_{2n+1}\}_{n\ge 0}$ such that
$$\varphi(z)\, =\, \varphi(0)\, \exp\lcr \sum_{n\ge 0} \frac{b_{2n+1}}{2n+1}\, z^{2n+1}\rcr.$$
\end{Propo}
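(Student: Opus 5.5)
Both directions come down to the quasi-ID representation \eqref{QID}, so the plan is to prove them separately around it.

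\emph{Necessity.} Let $\varphi$ be self-reciprocal DVD, so $\varphi(z)\varphi(-z)=\varphi(0)^2$ on $[-1,1]$. The discussion preceding the statement gives $\varphi(0)>0$. It also gives, by Theorem 8.1 of \cite{LPS}, the representation \eqref{QID} on the closed unit disk,
$$\varphi(z)=\varphi(0)\exp\Big[\sum_{n\ge1}\frac{a_n}{n}z^n\Big],$$
with a real sequence $\{a_n\}$. Substituting into the self-reciprocal identity yields
$$\exp\Big[\sum_{n\ge1}\frac{a_n}{n}\big(z^n+(-z)^n\big)\Big]=1, \quad\text{that is}\quad \exp\Big[\sum_{n\ge1}\frac{a_{2n}}{n}z^{2n}\Big]=1$$
for $z\in[-1,1]$. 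The exponent is a real power series converging on the disk, hence continuous on $[-1,1]$. A continuous real function whose exponential is identically $1$ must vanish identically. By uniqueness of power series coefficients, $a_{2n}=0$ for all $n\ge1$. Setting $b_{2n+1}=a_{2n+1}$ then gives the claimed form.

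\emph{Sufficiency.} Suppose $\varphi(z)=\varphi(0)\exp[\sum_{n\ge0} b_{2n+1}z^{2n+1}/(2n+1)]$ for a real sequence $\{b_{2n+1}\}$. The exponent $E(z)$ is an odd function. So wherever the representation is valid, $\varphi(z)\varphi(-z)=\varphi(0)^2e^{E(z)+E(-z)}=\varphi(0)^2$. This is exactly the DVD identity \eqref{VDM} with $f=g=\varphi$.

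The main (mild) obstacle is the meaning of the representation in the sufficiency direction. It has to hold on $[-1,1]$, or at least on a neighbourhood of $0$ that can then be extended. I would read the statement as asserting the identity wherever $\varphi$ is defined, i.e. on the closed unit disk, to match \eqref{QID}. If it is only assumed near $0$, both sides of $\varphi(z)\varphi(-z)=\varphi(0)^2$ are analytic in the open disk and continuous up to its closure, because $\varphi$ is a probability generating function. The identity then propagates to $[-1,1]$ by the identity theorem and continuity at $\pm1$.

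In the necessity direction, the point to check is that \eqref{QID} really applies. This requires $\varphi(z)\neq0$ on the closed disk, which follows from $\varphi(z)\varphi(-z)=\varphi(0)^2>0$ there, as already argued before the statement.
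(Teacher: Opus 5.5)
Your proposal is correct and follows the paper's route. The paper says only that the characterization follows immediately from the quasi-ID representation \eqref{QID} in the self-reciprocal case. You make this explicit: for necessity, $\varphi(z)\varphi(-z)=\varphi(0)^2$ forces the even coefficients $a_{2n}$ to vanish; for sufficiency, an odd exponent gives the identity directly.
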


At this point, it is worth mentioning that the question of the non-negativity of the coefficients of entire series constructed as exponential of real polynomials has been thoroughly investigated in \cite{LExp}. If $P$ is a real polynomial containing at most three monomials except the constant one, then it is not difficult to prove that 
$$\exp[P(z)]\;\mbox{has non-negative coefficients}\quad\Longleftrightarrow\quad P(z)\;\mbox{has non-negative coefficients.}$$ 
On the other hand, it follows from Theorem 1 in \cite{LExp} that if $p,q,r$ are odd integers such that $1<p<q<r$ and $q \,\wedge\, r = 1,$ then there exists $a > 0$ such that the entire series $\exp[z - a z^p + z^q + z^r]$ has non-negative coefficients; this example yields a self-reciprocal DVD probability generating function which is not ID.

The above Proposition \ref{SDVD} implies that an ID probability generating function is self-reciprocal DVD if and only if the underlying random variable is distributed as the independent sum
\begin{equation}
\label{Poisse}
\sum_{n\ge 0} (2n+1)\, X_n
\end{equation}
where $X_n$ has a Poisson distribution with parameter $b_{2n+1}/(2n+1)\ge 0$\footnote{We make the convention that $X_n = 0$ if $b_{2n+1} = 0$.} and $\sum_{n\ge 0} b_{2n+1} < \infty.$ This diversity in the discrete case contrasts with Theorem 3 in \cite{Luke}, which shows that the only ID self-reciprocal Van Dantzig characteristic function is the centered Gaussian. A large family of generating series corresponding to \eqref{Poisse} is 
\begin{equation}
\label{Polya}
f(z)\, =\, f(0)\,e^{\gamma z} \prod_{i\ge 1} \lpa\frac{1+c_i z}{1-c_i z}\rpa
\end{equation}
with $\gamma \ge 0$ and $c_i\in [0,1)$ such that $\sum_{i\ge 1} c_i < \infty,$ in which case one has
$$b_0 \, =\, \gamma\, +\, 2 \,\sum_{i\ge 1} c_i\qquad \mbox{and}\qquad b_{2n+1} \, =\, 2\, \sum_{i\ge 1} c_i^{2n+1}, \;\; n\ge 1.$$ 
Observe that $b_{2n+1} > 0$ for all $n\ge 0$ as soon as one $c_i$ is not zero. Recall also by Edrei's theorem - see e.g. Chapter 8 in \cite{Karlin}, that the underlying probability mass function is a P\'olya frequency sequence and is hence log-concave. The special case $c_1 = c \in [0,1/2]$ and $\gamma = c_n = 0$ for all $n\ge 2$ correspond to the persistence probabilities $\{p_n(-1)\}_{n\ge 0}$ where $X$ has density $2c e^{-x}$ on $\rl^+$ and we will come back to this example in Paragraph \ref{BiExpM} below. Another explicit example of self-reciprocal DVD generating series related to persistence probabilities is
$$f(z)\, =\, \frac{1+\sin(cz)}{\cos(cz)}$$
for some $c\in [0,1],$ which corresponds to $\{p_n(-1)\}_{n\ge 0}$ where $X$ has density $c \Un_{[0,1]}(x)$ on $\rl^+$ - see Remarks 2.6 and 5.7 in \cite{ABKS}. Here, one has $b_n = A_{2n} c^{2n+1}/(2n)!$ where $\{A_n\}_{n\ge 0}$ is the sequence of Euler's zigzag numbers. Observe that this example does not fall into the realm of \eqref{Polya} for $c > 0$ since $f$ has a negative pole at $z = -3\pi/(2c).$ 

\medskip

We now consider DVD pairs which are not self-reciprocal. An immediate consequence of \eqref{QID} is the following result, which mimics the Corollary p.119 in \cite{Luke} on the original Van Dantzig problem.  

\begin{Propo}
\label{NID}
If $(f, g)$ is a {\em DVD} pair such that $f\neq g$ and $f$ is {\em ID}, then $g$ is not {\em ID}. 
\end{Propo}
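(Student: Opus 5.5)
Since $f$ and $g$ both belong to a DVD pair, \eqref{QID} applies to each of them (the reasoning before Proposition \ref{SDVD} is symmetric in $f$ and $g$). We therefore write
$$f(z)\, =\, f(0)\exp\Big[\sum_{n\ge1}\frac{a_n}{n}z^n\Big],\qquad g(z)\, =\, g(0)\exp\Big[\sum_{n\ge1}\frac{c_n}{n}z^n\Big]$$
on the closed unit disk, with real sequences $\{a_n\}$ and $\{c_n\}$. Plugging these into \eqref{VDM} gives $\exp\big[\sum_{n\ge 1}\frac{a_n+(-1)^nc_n}{n}z^n\big]=1$ near the origin. Taking logarithms, which is legitimate since both sides are analytic and equal $1$ at $0$, and identifying coefficients, we get
$$c_n\, =\, (-1)^{n-1}a_n\qquad\mbox{for all } n\ge 1.$$

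Now assume that $f$ is ID. For a probability generating function on $\NN$ with $f(0)>0$, this means the canonical sequence satisfies $a_n\ge 0$ for all $n$, with $\sum a_n/n<\infty$. This is the standard characterization of discrete ID laws as compound Poisson, and the representation is unique because the logarithm is unique near $0$. Suppose for contradiction that $g$ is ID as well. Then $c_n\ge0$ for all $n$. For even $n$ this gives $c_n=-a_n\ge0$, and together with $a_n\ge 0$ it forces $a_{2n}=c_{2n}=0$ for all $n\ge1$. Hence $a_n=c_n$ for every $n$, and since $f(0)=g(0)$ is forced by $f(1)=g(1)=1$ (both exponents coincide at $z=1$), we conclude $f=g$. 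This contradicts the assumption $f\neq g$.

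The only delicate point is justifying the value of $g(0)$ and the convergence at $z=1$. The series $\sum a_n/n$ converges absolutely for an ID law. For $g$, one only needs the identity of exponents as formal power series: once $c_n=a_n$ for all $n$, the two exponents are identical analytic functions on the disk, so $f/f(0)=g/g(0)$ there. Evaluating at $z=1$, which is allowed by continuity of a generating function on the closed disk, gives $f(0)=g(0)$.

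I expect no real obstacle. The main point to state carefully is the uniqueness of the quasi-ID exponent, which follows from taking the principal logarithm near the origin, and the classical fact that ID on $\NN$ is equivalent to all $a_n$ being nonnegative.
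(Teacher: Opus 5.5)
Your proof is correct and is the argument the paper intends when it calls the statement an immediate consequence of \eqref{QID}. Writing both $f$ and $g$ in quasi-ID form, the identity \eqref{VDM} forces $c_n=(-1)^{n-1}a_n$; if both were ID, the even coefficients would vanish, so $f/f(0)=g/g(0)$, and evaluating at $z=1$ gives $f=g$.
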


As in the self-reciprocal case, a large family of DVD pairs is given by 
$$\varphi(z)\, =\, \varphi(0)\,e^{\gamma z} \prod_{i\ge 1} \lpa\frac{1+c_i z}{1-d_i z}\rpa\qquad\mbox{and}\qquad \psi(z)\, =\, \psi(0)\,e^{\gamma z} \prod_{i\ge 1} \lpa\frac{1+d_i z}{1-c_i z}\rpa$$
with $\gamma \ge 0$ and $c_i, d_i\in [0,1)$ such that $\sum_{i\ge 1} (c_i + d_i) < \infty.$ It is worth mentioning that this family provides some examples of non self-reciprocal DVD pairs $(\varphi,\psi)$ such that both $\varphi$ and $\psi$ are not ID. If we choose $c_1 = c_2 = 1/\sqrt{3}, d_1 = 1/\sqrt{7}, d_2 = 1/\sqrt{2}$ and $\gamma=c_n=d_n=0$ for all $n\ge 3,$ then it is easy to check that neither $\varphi$ nor $\psi$ is ID since $c_1^2+c_2^2 > d_1^2 + d_2^2$ and $c_1^4 + c_2^4 < d_1^4 + d_2^4.$  

In general, it is not easy to prove that a given distribution on the integers belongs to some discrete Van Dantzig pair or not. Kaluza's aforementioned theorem provides some negative answers and with this result one can check that Borel, Sibuya, Yule-Simon or Zeta distributions do not belong to any DVD pair. Let us however conclude this paragraph with a positive answer. For every $N\ge 1$ and $p\in(0,1),$ the DVD pair
$$\varphi(z)\, =\, \lpa\frac{1-p}{1-pz}\rpa^N\qquad\mbox{and}\qquad \psi(z) \, =\, \lpa\frac{1+pz}{1+p}\rpa^N$$
involves indeed the negative binomial distribution with parameters $(N,1-p)$ for $\varphi$ and the binomial distribution with parameters $(N,p/(1+p))$ for $\psi.$ Observe that $p/(1+p) < 1/2$ and it is easy to check that a binomial distribution with success probability greater than $1/2$ cannot belong to a DVD pair. Recall also from Remark \ref{Misc1} (a) that for $N=1$ the above DVD pair corresponds to some $(\varphi_\theta, \varphi_{1/\theta})$ for some $\theta \in (-1,0).$

\section{Some explicit computations for Laplace innovations}

In this section we will provide some exact formulae in the case where the innovation $X$ has a density on $\rl$ given by
$$f(x)\, =\, (1-\lbd)ae^{-a\vert x\vert}\Un_{\{x <0\}}\, +\, \rho b e^{-b x}\Un_{\{x >0\}}$$
for some $a,b >0$ and $\rho = \pb[X> 0] \in (0,1].$ This section can be viewed as a counterpart to the recent paper \cite{AR} which investigates the persistence probabilities of moving average processes with such Laplace innovations. 
 
\subsection{The case with negative drift}
\label{BiExpM}

For $\theta < 0,$ the persistence probabilities are easily computed recursively as $p_0(\theta) = 1$ and
\begin{eqnarray*}
p_n(\theta) & = & \int_0^\infty  f(x_1)\, dx_1 \lpa \int_{-\theta x_1}^\infty f(x_2)\,dx_2 \lpa \ldots \lpa \int_{-(\theta^{n-1}x_1 +\cdots + x_{n-1})}^\infty f(x_n)\, dx_n\right.\rpa \ldots \rpa\\
& = & \int_0^\infty \ldots \int_0^\infty f(y_1) f(y_2 -\theta y_1)\ldots f(y_n - \theta y_{n-1})\, dy_1 \ldots dy_n\\
& = & \frac{\rho^n}{(1-\theta)^{n-1}}
\end{eqnarray*}
for all $n\ge 1.$ This implies 
$$\varphi_\theta(z)\, =\, \frac{1 - \theta (1+\rho z)}{1 - (\theta + \rho z)}$$
for $\vert z\vert < (1-\theta)/\rho,$ and the Van Dantzig identity $\varphi_\theta(z)\varphi_{1/\theta}(-z) = 1$ is elementarily seen. The weights are explicitly computed as
\begin{equation}
\label{Weigh}
a_n(\theta) \, =\, \lpa\frac{\rho}{1-\theta}\rpa^n \Big( 1- \theta^n\Big)
\end{equation}
for all $n\ge 1,$ which shows that the renewal random variable ${\hat T}_\theta$ is ID if and only if $\theta \ge -1.$ Observe that in the self-reciprocal case $\theta = -1,$ one has $a_{2n}(-1) = 0$ and $\varphi_{-1}(z) = (2+z)/(2-z),$ as in Example II.11.15 in \cite{SV}.  
  
The mass function of $T_\theta$ is given by $q_0(\theta) = 1- \rho$ and
$$q_n(\theta) \, =\, \Big( 1 -\rho-\theta\Big)\lpa\frac{\rho}{1-\theta}\rpa^n, \quad n\ge 1.$$
This distribution, which can be viewed as a modified geometric distribution, is log-concave in accordance with Theorem C, since $q_n(\theta)^2 = q_{n-1}(\theta) q_{n+1} (\theta)$ for all $n\ge 2$ and
$$q_1(\theta)^2 \, -\, q_0(\theta)q_2(\theta) \, =\, -\theta\Big(1-\rho-\theta\Big) \lpa\frac{\rho}{1-\theta}\rpa^2 \, > \, 0.$$
We further compute the generating series 
\begin{equation}
\label{GenM}
\psi_\theta(z)\, =\,\lpa\frac{(1-\rho)(1-\theta) - \rho\theta z}{1-\theta -\rho z}\rpa\, =\, \lpa\frac{1-\theta- \rho}{1-\theta -\rho z}\rpa\times\lpa 1 -\frac{\rho\theta}{\rho+\theta -1}\, +\,\frac{\rho\theta z}{\rho +\theta -1}\rpa, 
\end{equation}
which shows that $T_\theta$ is distributed as the independent sum of a Bernoulli random variable with parameter $\rho\theta/(\rho +\theta -1)$ and of a geometric random variable with parameter $(1-\rho-\theta)/(1-\theta).$ See Example II.11.15 in \cite{SV} for some further details on such independent sums.

\begin{Propo}
\label{quid}
One has the equivalences
$$\psi_\theta\;\mbox{belongs to a {\em DVD} pair}\quad\Leftrightarrow\quad T_\theta \;\mbox{is quasi-{\em ID}}\quad\Leftrightarrow\quad\theta\rho\, > \, (1-\theta)(\rho-1).$$
Moreover, $T_\theta$ is {\em ID} if and only if $\theta \ge \rho -1.$ 
\end{Propo}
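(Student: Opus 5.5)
The plan is to work directly with the explicit rational form \eqref{GenM} of $\psi_\theta$. By \eqref{GenM}, $\psi_\theta$ has a single pole at $z=(1-\theta)/\rho>1$. It also has a single zero, at
$$z_0\, =\, \frac{(1-\rho)(1-\theta)}{\rho\theta}\, \le\, 0.$$
Everything will reduce to the position of $z_0$ relative to the unit circle. The elementary observation is that, since $\theta<0$, the condition $\theta\rho>(1-\theta)(\rho-1)$ is the same as $-\rho\theta<(1-\rho)(1-\theta)$, that is $\vert z_0\vert>1$. When $\rho=1$ one has $q_0(\theta)=0=z_0$ and the condition fails.

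\emph{Quasi-ID.} I would use the convention of \eqref{QID}, namely a representation $\psi_\theta(z)=\psi_\theta(0)\exp[\sum a_n z^n/n]$ on the closed unit disk with real, summable weights, together with Theorem 8.1 of \cite{LPS}.

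If $\vert z_0\vert>1$, set $r=\rho/(1-\theta)<1$ and factor
$$\psi_\theta(z)\, =\, \psi_\theta(0)\,\frac{1-z/z_0}{1-rz}\cdot$$
Taking logarithms gives explicitly
$$a_n\, =\, r^n-z_0^{-n},$$
which is absolutely summable, so $T_\theta$ is quasi-ID.

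Conversely, if $\vert z_0\vert\le 1$, then either $q_0(\theta)=\psi_\theta(0)=0$ or $\psi_\theta$ vanishes in the closed disk. Either way the exponential representation, which never vanishes, is impossible.

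\emph{DVD.} The paragraph preceding Proposition \ref{SDVD} already shows that membership in a DVD pair forces quasi-ID, so only the converse is needed. Assuming $\vert z_0\vert>1$, put $w=-z_0>1$ and define
$$g(z)\, =\, \frac{c}{\psi_\theta(-z)}\, =\, c'\,(1-\theta+\rho z)\sum_{n\ge 0} w^{-n}z^n .$$
Its coefficients are visibly positive and summable, since $w>1$ and $1-\theta,\rho>0$. Choosing $c'$ so that $g(1)=1$ gives a probability generating function with $\psi_\theta(z)g(-z)$ constant, which is \eqref{VDM} by evaluating at $z=0$. This closes the chain of equivalences.

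\emph{ID.} It remains to decide when all $a_n=r^n-z_0^{-n}$ are non-negative.

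For odd $n$, one has $z_0^{-n}\le 0$, so $a_n>0$ automatically.

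For even $n$, the requirement $a_n\ge0$ means $r\ge 1/\vert z_0\vert$. A direct computation turns this into $1-\rho\ge-\theta$, that is $\theta\ge\rho-1$. This condition implies $\vert z_0\vert\ge 1/r>1$, so it is consistent with quasi-ID. At equality $\theta=\rho-1$ one gets $a_{2n}=0$, the Bernoulli factor in \eqref{GenM} degenerates, and ID still holds.

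If $\theta<\rho-1$, then either $T_\theta$ is not even quasi-ID, or $a_2<0$; in both cases it is not ID, since an ID law is quasi-ID with uniquely determined non-negative weights.

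The only delicate point I expect is matching the convention of quasi-ID in \cite{LPS} (weights supported on $\NN$, with $\psi_\theta(0)>0$) to the rational factorization, in order to exclude the case $\vert z_0\vert\le1$ cleanly. The rest is explicit series expansion.
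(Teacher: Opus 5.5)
Your proof is correct and follows the paper's route. The zero of $\psi_\theta$ on $[-1,0]$ rules out \eqref{QID}, and the explicit weights $a_n=r^n-z_0^{-n}$ settle both quasi-ID and ID. Your partner $g(z)=c/\psi_\theta(-z)$ is exactly the product of the DVD partners of the geometric and Bernoulli factors that the paper uses.

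One harmless slip: at $\theta=\rho-1$ the Bernoulli factor in \eqref{GenM} does not degenerate, since its parameter is then $\rho/2$. What actually happens is $1/z_0=-r$, so $\psi_\theta(z)=\psi_\theta(0)(1+rz)/(1-rz)$ becomes self-reciprocal, which is consistent with your $a_{2n}=0$.
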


\begin{proof} If $\psi_\theta$ belongs to a DVD pair, then we have seen above that it must be quasi-ID and the latter property implies $\theta\rho > (1-\theta)(\rho -1)$ since otherwise $\psi_\theta$ would vanish on $[-1,0]$ and this would contradict \eqref{QID}. Moreover, if $\theta\rho > (1-\theta)(\rho-1)$ then $T_\theta$ is the independent sum of a negative binomial random variable and a Bernoulli random variable with parameter $\rho\theta/(\rho +\theta -1)< 1/2$, which both belong to a DVD pair by our above discussion, and the same property hence also holds for $T_\theta.$ In this case, we have the exponential representation
$$\psi_\theta(z)\, =\, (1-\rho) \exp\lcr \sum_{n\ge 1} \frac{\rho^n}{(1-\theta)^n}\lpa 1 - \frac{\theta^n}{(1-\rho)^n}\rpa \frac{z^n}{n}\rcr,$$
showing that $T_\theta$ is ID if and only if $\theta \ge \rho -1.$ 

\end{proof}

\begin{Rem}
{\em (a) The above proof also shows that $\psi_\theta$ is self-reciprocal DVD if and only if $\theta = \rho -1$ and that the sequence $\{b_{2n+1}\}_{n\ge 0}$ in Proposition \ref{SDVD} is given by
$$b_{2n+1}\, =\, 2\lpa\frac{1+\theta}{1-\theta}\rpa^{2n+1}.$$

(b) The same argument shows that 
$$\varphi_\theta(z)\, =\, \exp\lcr \sum_{n\ge 1} \frac{\rho^n}{(1-\theta)^n}\Big( 1 - \theta^n\Big) \frac{z^n}{n}\rcr$$
always belongs to a DVD pair and that ${\hat T}_\theta$ is ID if and only if $\theta \ge -1.$ } 
\end{Rem}

\subsection{The case with positive drift}
\label{BiExpP}

We focus here on the symmetric case with $a = b > 0$ and $\lbd = 1/2.$ An easy scaling argument shows that the persistence probabilities do not depend on $a$ and we hence suppose that $X$ has density $e^{-\vert x\vert}/2.$ This case was studied in \cite{Larralde} in the case $\theta\in (0,1)$, where the following formula
\begin{equation}
\label{Larr}
\psi_\theta(z)\, = \,\frac{(\theta z, \theta^2)_\infty}{(\theta z, \theta^2)_\infty + (z, \theta^2)_\infty}
\end{equation}
was obtained, with the notation $(z,q)_\infty = \prod_{n\ge 0}(1-zq^n)$ for the $q-$Pochhammer symbol. See also \cite{MK} and the Appendix therein for a related physical model. Observe that in \cite{Larralde}, it is also checked that
$$\psi_\theta(z)\, \to\, \frac{1- \sqrt{1-z}}{z}\, =\, \psi_1 (z)$$
as $\theta \to 1.$ The following proposition computes $\psi_{\theta} (z)$ in the remaining case $\theta > 1.$ 

\begin{Propo}
\label{Lara}
One has 
$$\psi_{1/\theta}(z)\, = \,\frac{(\theta^2 z, \theta^2)_\infty}{(\theta^2 z, \theta^2)_\infty + (\theta z, \theta^2)_\infty}$$
for all $\theta \in (0,1).$  
\end{Propo}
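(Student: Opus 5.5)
The plan is to derive the formula for $\psi_{1/\theta}$ algebraically from Larralde's formula \eqref{Larr} for $\psi_\theta$, $\theta\in(0,1)$, combined with Theorem A, with no new probabilistic computation.

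\emph{Step 1: symmetry.} Since $X$ has the symmetric density $e^{-\vert x\vert}/2$, the sequences $\{X_n\}$ and $\{-X_n\}$ have the same law. Hence the associate chain $\{\tZ_n\}$ has the same law as $\{Z_n\}$ for every drift. In particular $\tpsi_{1/\theta}=\psi_{1/\theta}$ and $\tphi_{1/\theta}=\varphi_{1/\theta}$. The density has no atoms, so Theorem A applies for every $\theta>0$.

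\emph{Step 2: eliminate $\varphi_\theta$.} Equate the expression of Theorem A with the one coming from \eqref{Connect}, for $z\in(-1,1)$:
$$\frac{1}{1-z\psi_{1/\theta}(z)}\, =\, \varphi_\theta(z)\, =\, \frac{1-z\psi_\theta(z)}{1-z}\cdot$$
Solving for $\psi_{1/\theta}$ gives, for $z\neq 0$ and then by continuity at $z=0$,
$$\psi_{1/\theta}(z)\, =\, \frac{1}{z}\lpa 1-\frac{1-z}{1-z\psi_\theta(z)}\rpa\, =\, \frac{1-\psi_\theta(z)}{1-z\psi_\theta(z)}\cdot$$

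\emph{Step 3: insert \eqref{Larr}.} Write $A=(\theta z,\theta^2)_\infty$ and $B=(z,\theta^2)_\infty$, so that $\psi_\theta=A/(A+B)$. Then
$$1-\psi_\theta\, =\, \frac{B}{A+B}\qquad\mbox{and}\qquad 1-z\psi_\theta\, =\, \frac{(1-z)A+B}{A+B},$$
which gives $\psi_{1/\theta}=B/((1-z)A+B)$. Use the factorization $B=(1-z)(\theta^2z,\theta^2)_\infty$ and cancel the common factor $1-z$, which is nonzero on $(-1,1)$. This yields exactly
$$\frac{(\theta^2 z,\theta^2)_\infty}{(\theta^2 z,\theta^2)_\infty+(\theta z,\theta^2)_\infty}\cdot$$

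\emph{Main obstacle.} The only delicate point is the input \eqref{Larr}, which we take from \cite{Larralde} as stated for $\theta\in(0,1)$. We must also make sure the denominators $1-z\psi_\theta(z)$ and $(1-z)A+B$ do not vanish on $(-1,1)$. This holds because $\vert z\psi_\theta(z)\vert<1$ there, $\psi_\theta$ being a (possibly defective) probability generating function. The resulting identity then holds as an identity of analytic functions on $(-1,1)$, and it extends to the broader domain of meromorphy of the right-hand side.
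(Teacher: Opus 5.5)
Your proof is correct and is essentially the paper's argument. Both combine Theorem A (with $\tpsi_{1/\theta}=\psi_{1/\theta}$ by symmetry) and \eqref{Connect} into $(1-z\psi_\theta(z))(1-z\psi_{1/\theta}(z))=1-z$, then substitute \eqref{Larr} and use $(z,\theta^2)_\infty=(1-z)(\theta^2 z,\theta^2)_\infty$. The only difference is that you solve for $\psi_{1/\theta}$ directly, while the paper checks that $\psi_\theta(\theta z)$ satisfies the same identity, which also records $\psi_{1/\theta}(z)=\psi_\theta(\theta z)$ and the functional equation later reused in Propositions \ref{Recure} and \ref{Compound}.
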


\begin{proof} Fix $\theta\in (0,1)$ and set $F_i(z) = (\theta^i z, \theta^2)_\infty$ for $i=0,1,2.$ Using
$$\frac{F_0(z)}{F_2(z)}\, =\, 1-z,$$
we obtain
$$\lpa 1\, +\, \frac{F_0(z)}{F_1(z)}\rpa \lpa 1\, +\, \frac{F_1(z)}{F_2(z)}\rpa\, =\, 2\, +\, \frac{F_0(z)}{F_1(z)}\, +\, \frac{F_1(z)}{F_2(z)}\, -\, z$$
and then, using \eqref{Larr},
$$\psi_\theta(z)\, +\, \psi_\theta(\theta z)\, -\, z \,\psi_\theta(z)\psi_\theta(\theta z)\, =\, \lpa \frac{2\, +\, \frac{F_0(z)}{F_1(z)}\, +\, \frac{F_1(z)}{F_2(z)}\, -\, z}{\lpa 1\, +\, \frac{F_0(z)}{F_1(z)}\rpa \lpa 1\, +\, \frac{F_1(z)}{F_2(z)}\rpa}\rpa\, =\, 1.$$
Therefore, one has
$$(1 - z\psi_\theta(z))(1 -z\psi_\theta(\theta z))\, =\, 1-z$$
and a combination of \eqref{Connect} and \eqref{Duelle}, with $\tphi_{1/\theta} = \varphi_{1/\theta}$ by symmetry, implies 
$$\psi_{1/\theta}(z)\, = \,\psi_{\theta} (\theta z)\, =\, \frac{(\theta^2 z, \theta^2)_\infty}{(\theta^2 z, \theta^2)_\infty + (\theta z, \theta^2)_\infty}$$
as required.
\end{proof}

\begin{Rem}
\label{Finito}
{\em For all $\theta\in (0,1),$ one has $\psi_{1/\theta}(1) < 1$ so that $T_{1/\theta}$ is defective. The above formula yields
$$\pb[T_{1/\theta} = \infty]\, =\, 1\, -\,\psi_{1/\theta}(1)\, =\, \frac{(\theta, \theta^2)_\infty}{(\theta^2, \theta^2)_\infty + (\theta, \theta^2)_\infty}$$
and we can deduce from Theorem A that 
$$\esp[T_\theta]\, =\, \varphi_\theta (1)\, -\, 1\, =\, \frac{\psi_{1/\theta}(1)}{1\, -\,\psi_{1/\theta}(1)}\, =\, \frac{(\theta^2, \theta^2)_\infty}{(\theta, \theta^2)_\infty}\cdot$$
Applying Tonelli's theorem, we compute
$$\log\esp[T_\theta]\, =\, \sum_{n\ge 1} \frac{\theta^n}{n(1+\theta^n)}\,\sim\, -\frac{1}{2}\,\log(1-\theta)\qquad\mbox{as $\theta\to 1.$}$$
Some further work - see Theorem 3.2. in \cite{Banerjee} - yields the more precise asymptotic
$$\esp[T_\theta]\,\sim\, \sqrt{\frac{\pi}{1-\theta}}\qquad \mbox{as $\theta\to 1,$}$$
which was recently shown to hold true for a large class of centered innovations \cite{Rasch}.}
\end{Rem}

The next result gives a recurrent relation for the sequence $\{q_n(\theta)\}_{n\ge 1}$ which is reminiscent to that for the Catalan numbers.  

\begin{Propo}
\label{Recure}
For all $\theta > 0,$ one has $q_0(\theta) = 1/2$ and
$$q_n(\theta)\, =\, \frac{1}{1+\theta^n}\sum_{k=0}^{n-1} \theta^k q_k(\theta)q_{n-1-k}(\theta), \quad n\ge 1.$$
\end{Propo}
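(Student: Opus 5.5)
The plan is to translate the recurrence into a functional equation for $\psi_\theta$ and read that equation off the identity already obtained in the proof of Proposition \ref{Lara}. First, $q_0(\theta)=\pb[Z_1<0]=\pb[X<0]=1/2$ by symmetry of the Laplace density. Next, multiply the claimed recurrence by $z^n$ and sum over $n\ge 1$, using
$$\sum_{n\ge 0}\theta^n q_n(\theta)z^n=\psi_\theta(\theta z).$$
The left-hand side gives $\psi_\theta(z)+\psi_\theta(\theta z)-2q_0(\theta)=\psi_\theta(z)+\psi_\theta(\theta z)-1$. The right-hand side is a Cauchy product and gives $z\,\psi_\theta(\theta z)\psi_\theta(z)$. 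Since $2q_0(\theta)=1$ also matches the $n=0$ term, the whole statement (for $n\ge1$, together with $q_0=1/2$) is equivalent to the identity of power series
$$\psi_\theta(z)\,+\,\psi_\theta(\theta z)\,-\,z\,\psi_\theta(z)\psi_\theta(\theta z)\,=\,1,$$
holding on some small disk around $0$. Equivalently, $(1-z\psi_\theta(z))(1-z\psi_\theta(\theta z))=1-z$. Identifying coefficients near the origin then gives the recurrence.

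For $\theta\in(0,1)$, this identity is exactly what was derived in the proof of Proposition \ref{Lara} from Larralde's formula \eqref{Larr}, via $F_0/F_2=1-z$. Both sides are analytic on the unit disk, so coefficients can be compared and this case is immediate. For $\theta=1$, I would either pass to the limit $\theta\to1$ coefficientwise (each $q_n(\theta)$ is continuous in $\theta$ by dominated convergence on the integral formula), or check directly with $\psi_1(z)=(1-\sqrt{1-z})/z$, for which $(1-z\psi_1(z))^2=1-z$.

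For $\theta>1$, write $\theta=1/s$ with $s\in(0,1)$. Proposition \ref{Lara} gives $\psi_{1/s}(z)=\psi_s(sz)$, hence $\psi_{1/s}(z/s)=\psi_s(z)$ as formal power series. This holds on $|z|<s$, where all series converge, since $\psi_s$ has radius at least $1$. The required identity
$$\psi_{1/s}(z)+\psi_{1/s}(z/s)-z\,\psi_{1/s}(z)\psi_{1/s}(z/s)=1$$
becomes $\psi_s(sz)+\psi_s(z)-z\,\psi_s(z)\psi_s(sz)=1$, which is the $\theta=s\in(0,1)$ identity already established. It holds on $|z|<s$, which is enough to identify coefficients.

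The main point of care is domains of convergence. The series $\psi_\theta(\theta z)$ for $\theta>1$ only converges for $|z|<1/\theta$, so the argument must be phrased as an identity of analytic functions near $0$, or of formal power series. With that in place, no genuine obstacle remains: the analytic content lies entirely in \eqref{Larr}, which the paper quotes from \cite{Larralde}.
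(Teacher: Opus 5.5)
Your proposal is correct and follows the paper's route: you rewrite the recurrence, via generating functions, as $\psi_\theta(z)+\psi_\theta(\theta z)-z\,\psi_\theta(z)\psi_\theta(\theta z)=1$, and you take that identity from the proof of Proposition \ref{Lara}. Your separate treatment of $\theta=1$ and of $\theta>1$ (through $\psi_{1/s}(z/s)=\psi_s(z)$) supplies a step the paper leaves implicit, since the proof it cites only establishes the identity for $\theta\in(0,1)$.
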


\begin{proof} First, one has $q_0(\theta) = 1 - p_1(\theta) = 1/2$ by the symmetry of $X.$ Moreover, we can write
$$\sum_{n\ge 1} (1+ \theta^n)\, q_n(\theta)\, z^n \, = \, \psi_\theta(z)\, +\, \psi_\theta(\theta z)\, -\, 1\, = \, z \psi_\theta(z)\psi_\theta(\theta z)\, = \, \sum_{n\ge 1} \lpa \sum_{k=0}^{n-1} \theta^k q_k(\theta)q_{n-1-k}(\theta) \rpa z^n,$$
where the second equality comes from the previous proof. This concludes the argument.

\end{proof}

\begin{Rem} {\em (a) For $\theta = 1$ the recursive formula becomes
$$q_n(1)\, =\,\frac{1}{2}\,\sum_{k=0}^{n-1} q_k(1)q_{n-k-1}(1)$$
and we recover the expression $q_n(1) = 2^{-(2n+1)} C_n$ where $C_n$ is the $n-$th Catalan number, coming also from $\psi_1(z) = (1 -\sqrt{1-z})/z.$

\medskip

(b) The recursive formula also implies the rational fraction representation
$$q_n(\theta)\, =\, \frac{1}{2^{n+1}}\, \frac{P_n(\theta)}{Q_n(\theta)}$$
where $P_n$ and $Q_n$ are monic palindromic polynomials with integer coefficients such that $\deg Q_n = \deg P_n +n.$ The first polynomials are
$$P_1 = P_2 = 1, \quad P_3 = 1+ 3X + X^2, \quad P_4 = 1+ 4X + 4X^2 + 4X^3 + X^4$$
and
$$Q_1 = 1+X, \quad Q_2 = 1+X^2, \quad Q_3 = (1+ X^2)(1+X^3), \quad Q_4 = (1+X)^2(1+X^2)(1+X^4).$$
Unfortunately, contrary to the case of uniform innovations - see \cite{ABKS}, we could not locate any relevant combinatorics lying behind those rational fractions.} 

\end{Rem}

We now study the infinite divisibility properties of the random variable $T_\theta.$ Since the symmetric bi-exponential distribution is log-concave, the sequence $\{q_n(\theta)\}_{\{n\ge 0\}}$ is log-convex by Theorem B, and Kaluza's theorem shows that its generating function is written as in \eqref{Kalouze} for some absolutely monotonic function $\sigma_\theta(z).$ 
For $\theta = 1$ one easily finds
$$\sigma_1(z)\, =\, \frac{1}{2(1+\sqrt{1-z})}\cdot$$
The following gives an expression of the function $\sigma_\theta(z)$ as a $q$-series in the remaining cases.

\begin{Propo}
\label{Compound}
With the above notation, for every $\theta\in(0,1)$ one has
$$\sigma_\theta(z)\, =\, \frac{1}{1+\theta}\,\sum_{k\ge 0} \frac{(\theta,\theta^2)_k}{(\theta^4;\theta^2)_k}\,(\theta z)^k\qquad\mbox{and}\qquad \sigma_{1/\theta}(z)\, =\, \frac{\theta}{1+\theta}\,\sum_{k\ge 0} \frac{(\theta,\theta^2)_k}{(\theta^4;\theta^2)_k}\,(\theta^2 z)^k.$$
\end{Propo}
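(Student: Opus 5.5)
The plan is to read $\sigma_\theta$ off the explicit formula \eqref{Larr} and expand the resulting ratio of infinite products with the $q$-binomial theorem. Write $F_i(z)=(\theta^i z,\theta^2)_\infty$ for $i=0,1,2$, as in the proof of Proposition \ref{Lara}. Since $\pb[X<0]=1/2$, the representation \eqref{Kalouze} reads $\psi_\theta(z)=\tfrac12\,(1-z\sigma_\theta(z))^{-1}$. Inverting this and inserting \eqref{Larr} gives
$$z\,\sigma_\theta(z)\, =\, 1-\frac{1}{2\psi_\theta(z)}\, =\, 1-\frac{F_1(z)+F_0(z)}{2F_1(z)}\, =\, \frac{1}{2}\lpa 1-\frac{F_0(z)}{F_1(z)}\rpa .$$
So everything reduces to a power series expansion of $F_0/F_1=(z;\theta^2)_\infty/(\theta z;\theta^2)_\infty$.

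For this expansion I will use the $q$-binomial theorem
$$\frac{(aw;q)_\infty}{(w;q)_\infty}\, =\, \sum_{k\ge0}\frac{(a;q)_k}{(q;q)_k}\,w^k,\qquad |w|<1,$$
with $q=\theta^2$, $w=\theta z$ and $a=\theta^{-1}$. The $k=0$ term cancels the $1$. For $k\ge1$ I factor
$$(\theta^{-1};\theta^2)_k=(1-\theta^{-1})(\theta;\theta^2)_{k-1},\qquad (\theta^2;\theta^2)_k=(1-\theta^2)(\theta^4;\theta^2)_{k-1}.$$
After dividing by $z$ and reindexing $j=k-1$, this yields
$$\sigma_\theta(z)\, =\, \frac{1-\theta}{2(1-\theta^2)}\sum_{j\ge0}\frac{(\theta;\theta^2)_j}{(\theta^4;\theta^2)_j}(\theta z)^j\, =\, \frac{1}{2(1+\theta)}\sum_{j\ge0}\frac{(\theta;\theta^2)_j}{(\theta^4;\theta^2)_j}(\theta z)^j.$$
This is the claimed $q$-series, valid first for $|z|<1/\theta$ and then by analytic continuation.

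The leading constant needs care. I will cross-check it against Proposition \ref{Recure}: expanding $\tfrac12(1-z\sigma_\theta)^{-1}$ gives $q_1(\theta)=\sigma_\theta(0)/2$, while the recursion gives $q_1(\theta)=q_0(\theta)^2/(1+\theta)=1/(4(1+\theta))$. Hence $\sigma_\theta(0)=1/(2(1+\theta))$ under the normalization \eqref{Kalouze} with $\pb[X<0]=1/2$. This computation therefore yields $1/(2(1+\theta))$, not the stated $1/(1+\theta)$. The stated constant $1/(1+\theta)$ would instead correspond to writing $\psi_\theta=\tfrac12(1-\tfrac{z}{2}\sigma_\theta)^{-1}$, that is, absorbing the factor $\tfrac12$ into the variable. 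I will determine which convention the statement intends and adjust the prefactor accordingly; the $q$-series part is unaffected.

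For $\sigma_{1/\theta}$ I will use the identity $\psi_{1/\theta}(z)=\psi_\theta(\theta z)$ from the proof of Proposition \ref{Lara}. Since $\pb[X<0]=1/2$ is the same for both drifts, comparing the two representations \eqref{Kalouze} gives $z\sigma_{1/\theta}(z)=\theta z\,\sigma_\theta(\theta z)$, hence $\sigma_{1/\theta}(z)=\theta\,\sigma_\theta(\theta z)$. Substituting the first formula replaces $(\theta z)^k$ by $(\theta^2z)^k$ and multiplies the prefactor by $\theta$, which gives the second formula with the same normalization.

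The only genuinely delicate step is the bookkeeping in the $q$-binomial expansion: choosing $a=\theta^{-1}$ and $w=\theta z$ correctly, and extracting the factors $(1-\theta^{-1})$ and $(1-\theta^2)$ so that the ratio $(\theta;\theta^2)_j/(\theta^4;\theta^2)_j$ appears. Together with this comes the consistency check of the overall constant against $q_1(\theta)$ described above.
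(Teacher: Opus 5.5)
Your proof is correct and follows the paper's route: the same $q$-binomial expansion of $F_0/F_1$ with $q=\theta^2$, $w=\theta z$, $a=\theta^{-1}$, the same extraction of the factors $(1-\theta^{-1})$ and $(1-\theta^2)$, and the same relation $\sigma_{1/\theta}(z)=\theta\,\sigma_\theta(\theta z)$, obtained from $\psi_{1/\theta}(z)=\psi_\theta(\theta z)$ in the proof of Proposition~\ref{Lara}.

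Your constant is the right one, so you should not look for another convention. Under \eqref{Kalouze} one has $1/\psi_\theta=2(1-z\sigma_\theta)$. The paper's proof in effect matches $1+F_0/F_1$ with $2-z\sigma_\theta$ and so loses a factor $2$. The correct prefactors are therefore $\frac{1}{2(1+\theta)}$ and $\frac{\theta}{2(1+\theta)}$. This is confirmed by $q_1(\theta)=\frac{1}{4(1+\theta)}$, and by the paper's own $\sigma_1(z)=\frac{1}{2(1+\sqrt{1-z})}$, which is the $\theta\to1$ limit of your formula.
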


\begin{proof} It follows from \eqref{Larr} that
$$\frac{1}{\psi_\theta(z)}\, =\, 1\,+\, \frac{(z,\theta^2)_\infty}{(\theta z; \theta^2)_\infty}\, =\, 1\, +\, \sum_{k\ge 0} \frac{(\theta^{-1},\theta^2)_k}{(\theta^2;\theta^2)_k}(\theta z)^k$$
where the last equality follows from the $q-$binomial theorem - see e.g. Formula (1.6) in \cite{Gasp} - with the usual notation $(z, q)_0 = 1$ and $(z,q)_k = \prod_{n= 0}^{k-1}(1-zq^n)$ for all $k\ge 1.$ On the other hand, one has  
$$\frac{(\theta^{-1},\theta^2)_k}{(\theta^2;\theta^2)_k}\,=\, - \frac{1}{\theta(1+\theta)} \lpa\frac{(\theta, \theta^2)_{k-1}}{(\theta^4;\theta^2)_{k-1}}\rpa$$
 for all $k \ge 1,$ which gives altogether the required formula 
$$\sigma_\theta(z)\, =\, \frac{1}{1+\theta}\,\sum_{k\ge 0} \frac{(\theta,\theta^2)_k}{(\theta^4;\theta^2)_k}\,(\theta z)^k.$$
Applying Proposition \ref{Lara}, we finally obtain 
$$\sigma_{1/\theta}(z)\, =\, \theta \sigma_{\theta}(\theta z)\, =\, \frac{\theta}{1+\theta}\,\sum_{k\ge 0} \frac{(\theta,\theta^2)_k}{(\theta^4;\theta^2)_k}\,(\theta^2 z)^k.$$ 
\end{proof}

We will now prove that the sequence $\{p_n(\theta)\}_{\{n\ge 0\}}$ can be represented as the integer moment sequence of a positive random variable. More precisely, we will show that there exists $X_\theta\in [0,1]$ such that
$$p_n(\theta)\, = \, \esp\lcr X_\theta^n\rcr, \qquad n\ge 0.$$
Equivalently, this means that both sequences $p_n(\theta)$ and $q_n(\theta) = \esp [X_\theta^n (1- X_\theta)]$ are completely monotonic and hence log-convex, which gives a refinement of Theorem B. By Theorem VI.7.8. in \cite{SV}, this also means that the law of $T_\theta$ is not only compound-geometric, but also a mixture of geometric distributions and we will come back to this property below. In the random walk case $\theta = 1,$ the property is well-known and easy to see because
$$\varphi_1(z) \, = \, \frac{1}{\sqrt{1-z}}\, =\, \esp\lcr e^{z\gamma_{1/2}}\rcr\, =\, \esp\lcr e^{z\gamma_1\times\beta_{1/2,1/2}}\rcr\, =\, \esp\lcr\frac{1}{1-z \beta_{1/2,1/2}}\rcr \, = \, \sum_{n\ge 0}\, \esp\lcr \beta_{1/2,1/2}^n\rcr z^n,$$ 
where $\gamma_t$ and $\beta_{a,b}$ stand for the usual gamma and beta random variables and the product in the third equality is independent. This means that $X_1\sim\beta_{1/2,1/2}$ has an arc-sine distribution and we refer to Example II.11.11 in \cite{SV} for further details. The following result shows that in the case $\theta\neq 1,$ the random variable $X_\theta$ is discrete.
																															\begin{Propo}
\label{Spektral}
For all $\theta\in (0,1)$ there exists a sequence $\{x_i(\theta), i\ge 0\}$ with $x_i(\theta) \in\; ]\theta^{2i+1}, \theta^{2i}[$ for all $i\ge 0,$ and two random variables $X_\theta$ and $X_{1/\theta}$ valued respectively in $\{x_i(\theta)\}$ and $\{\theta x_i(\theta)\}\cup\{1\},$ such that
$$p_n(\theta)\, =\, \esp\lcr X_\theta^n\rcr\qquad \mbox{and}\qquad p_n(1/\theta)\, =\, \esp[X_{1/\theta}^n]$$
for all $n\ge 0.$ 
\end{Propo}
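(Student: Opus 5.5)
The plan is to obtain closed forms for $\varphi_\theta$ and $\varphi_{1/\theta}$ as ratios of $q$-Pochhammer products, and then to show that each is a convergent sum of simple fractions $c_i/(1-x_iz)$ with positive weights $c_i$. Throughout, set $F_i(z)=(\theta^i z,\theta^2)_\infty$ for $i=0,1,2$ and use $F_0=(1-z)F_2$.

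\emph{Closed forms.} By symmetry $\tpsi_{1/\theta}=\psi_{1/\theta}$. Theorem A and Proposition \ref{Lara} then give
$$\varphi_\theta(z)=\frac{1}{1-z\psi_{1/\theta}(z)}=\frac{F_1(z)+F_2(z)}{F_1(z)+F_2(z)-zF_2(z)}=\frac{F_1(z)+F_2(z)}{F_0(z)+F_1(z)}=\frac{D(\theta z)}{D(z)},$$
where $D(z)=(z,\theta^2)_\infty+(\theta z,\theta^2)_\infty$ is entire of order zero. Combining this with \eqref{Duelle} gives
$$\varphi_{1/\theta}(z)=\frac{1}{(1-z)\,\varphi_\theta(z)}=\frac{D(z)}{(1-z)\,D(\theta z)}.$$
Both formulas first hold on $(-1,1)$ and then extend meromorphically to $\CC$. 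It therefore suffices to locate the zeros of $D$ and to compute the residues.

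\emph{Zeros of $D$.} At $z=\theta^{-2i}$ the term $F_0$ vanishes, and $F_1(\theta^{-2i})=\prod_n(1-\theta^{2n+1-2i})$ has exactly $i$ negative factors, so $D(\theta^{-2i})$ has sign $(-1)^i$. At $z=\theta^{-2i-1}$ the term $F_1$ vanishes and $F_0$ has $i+1$ negative factors, so $D(\theta^{-2i-1})$ has sign $(-1)^{i+1}$. Hence $D$ has a zero $1/x_i(\theta)$ in each interval $]\theta^{-2i},\theta^{-2i-1}[$, which gives $x_i(\theta)\in\,]\theta^{2i+1},\theta^{2i}[$.

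The main obstacle is to prove that these are \emph{all} the zeros of $D$ and that they are simple. I would try a Rouché-type count on circles $|z|=\theta^{-2i-1/2}$ (or on circles through suitably chosen points). On such a circle, one of $F_0,F_1$ should dominate the other in modulus, up to a uniform constant, using the explicit product structure $F_0(z)/F_1(z)=\prod_n (1-\theta^{2n}z)/(1-\theta^{2n+1}z)$. The number of zeros of $D$ inside the circle would then equal that of the dominant term, namely $i+1$ or so, matching the sign changes found above. As a fallback, one can show that $D$ has only real zeros via a Laguerre--Pólya/Hermite--Biehler type argument for the sum of the two real-zero products $F_0$ and $F_1$, whose zeros interlace.

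\emph{Partial fractions.} Since $D$ has genus zero, Hadamard gives $D(z)=2\prod_i(1-x_iz)$. A bound on $|D(\theta z)/D(z)|$ along the same circles, which avoid the zeros, lets the Mittag-Leffler expansion close up, so that
$$\varphi_\theta(z)=\sum_i\frac{c_i}{1-x_iz},\qquad c_i=-\frac{x_i\,D(\theta/x_i)}{D'(1/x_i)}.$$
For the sign of $c_i$, use that at $w=1/x_i$ one has $F_0(w)=-F_1(w)$, hence $D(\theta w)=F_1(w)+F_2(w)=F_1(w)\,w/(w-1)$, which has sign $(-1)^i$. The derivative $D'(1/x_i)$ has sign $(-1)^{i+1}$, because $D$ passes from sign $(-1)^i$ to sign $(-1)^{i+1}$ across its simple zero. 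Therefore $c_i>0$. Evaluating at $z=0$ gives $\sum_i c_i=p_0=1$. Expanding in powers of $z$ yields $p_n(\theta)=\sum_i c_ix_i^n=\esp[X_\theta^n]$ with $\pb[X_\theta=x_i]=c_i$.

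For $1/\theta$, the poles of $D(z)/((1-z)D(\theta z))$ are at $z=1$ and $z=1/(\theta x_i)$. The same residue computation, together with the already known sign of $D$ on each interval, shows that all residues are positive. The weights sum to $p_0(1/\theta)=1$, which gives $X_{1/\theta}$ supported on $\{\theta x_i(\theta)\}\cup\{1\}$. The atom at $1$ has mass $1/\varphi_\theta(1)=1/\esp[T_{\theta}+1]$, in accordance with Remark \ref{BID} (b).
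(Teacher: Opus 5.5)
Your closed forms $\varphi_\theta(z)=D(\theta z)/D(z)$ and $\varphi_{1/\theta}(z)=D(z)/((1-z)D(\theta z))$ are correct. So are the sign changes of $D$ and your sign argument for the residues: at a zero $w$ of $D$ one has $F_0(w)=-F_1(w)$, hence $D(\theta w)=F_1(w)\,w/(w-1)$. Expanding $\varphi_\theta$ and $\varphi_{1/\theta}$ directly is a legitimate alternative to the paper. The paper instead decomposes $\psi_\theta(z)=(\theta z,\theta^2)_\infty/D(z)$, sums tails, and handles $1/\theta$ through $\psi_{1/\theta}(z)=\psi_\theta(\theta z)$ and $\pb[T_{1/\theta}=\infty]$.

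The gap is the step everything rests on: that $D$ has no zeros besides one simple zero in each $]\theta^{-2i},\theta^{-2i-1}[$. The contour bound you need to close the Mittag-Leffler expansions is also missing. You only sketch these, and the mechanism you propose fails as stated. For $z=\theta^{-2i-1/2}e^{\ii t}$, using $|1-\theta^{-s}e^{\ii t}|=\theta^{-s}|1-\theta^{s}e^{\ii t}|$, one gets
$$\Bigl|\frac{F_0(z)}{F_1(z)}\Bigr|\,=\,\theta^{-(i+1/2)}\prod_{j\ge i+1}\frac{|1-\theta^{2j-1/2}e^{\ii t}|}{|1-\theta^{2j+1/2}e^{\ii t}|}.$$
This exceeds $1$ at $t=\pi$. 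At $t=0$ it equals $\theta^{-(i+1/2)}(\theta^{2i+3/2},\theta^2)_\infty/(\theta^{2i+5/2},\theta^2)_\infty$, which for fixed $i$ tends to $0$ as $\theta\to 1$ (for $i=0$, $\theta=0.9$ it is about $0.35$). So on small circles neither term dominates, and Rouché cannot be applied. Comparability ``up to a uniform constant'' is not a Rouché hypothesis. These circles also cut through the interval containing the $i$-th zero, so even a valid count there would need an extra sign check.

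What rescues your idea is that the infinite product above is bounded above and below by positive constants independent of $i$ and $t$. Hence strict dominance holds once $i\ge i_0(\theta)$, and large circles are all you need. Take the circles $|z|=\theta^{-2i-3/2}$, which lie in the gaps between intervals. The same computation gives $|F_1/F_0|\le C_\theta\,\theta^{i+1}<1$ for $i$ large, so $D$ has exactly $i+1$ zeros in that disk, like $F_0$. Your $i+1$ sign-change zeros exhaust them, which gives simplicity and excludes any other zero, real or not; since these disks exhaust $\CC$, this suffices. The same bound yields $\varphi_\theta(z)=O(|z|^{-1/2})$ on these circles and $\varphi_{1/\theta}(z)=O(|z|^{-1/2})$ on $|z|=\theta^{-2i-5/2}$, which is exactly what your Mittag-Leffler step requires.

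The paper avoids comparing $F_0$ with $F_1$ altogether. On a fixed circle of radius in $]\theta^{-2N-1},\theta^{-2N-2}[$ it compares $D$ with the truncations $\prod_{n=0}^M(1-\theta^{2n}z)+\prod_{n=0}^M(1-\theta^{2n+1}z)$. These are polynomials of degree $M+1$ with $M+1$ sign changes, hence with exactly one simple zero per interval, and Rouché transfers this count to $D$. Your Hermite--Biehler fallback essentially amounts to this. However, reality of the zeros alone would not give simplicity or uniqueness in each interval without such a count.
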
																																																												\begin{proof}

Fix $\theta\in (0,1)$ and consider the entire function
$$D_\theta(z) \,= \,(\theta z, \theta^2)_\infty\, + \,(z, \theta^2)_\infty.$$ 
It is claimed in \cite{Larralde} that the zeroes of this function are simple and real and that there is exactly one zero in each interval $]\theta^{-2i}, \theta^{-2i-1}[$ for all $i\ge 0.$ We will demonstrate this rigorously later on and accept it for the time being. Applying the $q-$binomial theorem - see (1.12) and (1.11) in \cite{Gasp} - implies
$$D_\theta(z) \,= \,\sum_{n\ge 0} \frac{\theta^{n(n-1)}(1 + \theta^n)}{(\theta^2,\theta^2)_n}\, (-z)^n$$
and shows that $D_\theta$ has order zero since
$$\frac{1}{n \log n} \log \lpa\frac{\theta^{n(n-1)}(1 + \theta^n)}{(\theta^2,\theta^2)_n}\rpa\, \to\, -\infty\qquad\mbox{as $n\to\infty.$ }$$  
By the Hadamard factorization theorem - see e.g. Theorem XI.3.4 in \cite{Conway}, we obtain
$$D_\theta (z) \,=\,  D_\theta(0)  \prod_{n\ge 1}\lpa 1-\frac{z}{z_n(\theta)}\rpa \,=\,  2 \prod_{n\ge 1}\lpa 1-\frac{z}{z_n(\theta)}\rpa$$
where $\{z_n(\theta)\}$ is the ordered sequence of zeroes of $D_\theta(z)$ and then, using \eqref{Larr}, 
$$\psi_\theta(z)\, =\, \frac{(\theta z, \theta^2)_\infty}{D_\theta(z)}\, =\, \frac{1}{2}\lim_{n\to\infty}  \prod_{i= 0}^n \lpa \frac{1- \theta^{2i+1} z}{1- x_i(\theta) z}\rpa$$
where we have set $x_i(\theta) = 1/z_i(\theta) \in\; ]\theta^{2i+1},\theta^{2i}[$ and the convergence is uniform on $[-1,1].$ This yields the partial fraction decomposition
$$\psi_\theta(z)\, =\, \frac{1}{2}\,\sum_{i\ge 0} \frac{c_i(\theta)}{1- x_i(\theta) z}$$
with 
$$c_i(\theta)\, =\, (\theta z_i(\theta), \theta^2)_\infty \, \prod_{j\neq i} \frac{1}{1-x_j(\theta)z_i(\theta)}\cdot$$
Moreover, decomposing
$$c_i(\theta)\, =\, (1-\theta^{2i+1}z_i(\theta))\,\times\, \prod_{j<i} \lpa\frac{1-\theta^{2j+1}z_i(\theta)}{1-x_j(\theta)z_i(\theta)}\rpa\,\times\, \prod_{j > i} \lpa\frac{1-\theta^{2j+1}z_i(\theta)}{1-x_j(\theta)z_i(\theta)}\rpa$$
shows that $c_i(\theta) \ge 0$ for all $i\ge 0$ since $z_i(\theta) \in \;]\theta^{-2i}, \theta^{-2i-1}[$ and both numerators and denominators are negative resp. positive in the finite product resp. in the infinite product. Therefore, by Tonelli's theorem,
$$\psi_\theta(z)\, =\, \frac{1}{2}\,\sum_{n\ge 0} \Big(\sum_{i\ge 0} c_i(\theta) (x_i(\theta))^n\Big)\, z^n$$
and we finally obtain
$$p_n(\theta)\, =\, \sum_{k\ge n} q_k(\theta)\, =\, \frac{1}{2}\,\sum_{i\ge 0} \frac{c_i(\theta)}{1-x_i(\theta)}\, (x_i(\theta))^n, \qquad n\ge 0.$$
Recalling that $p_0(\theta) = 1,$ this shows the required representation $p_n(\theta) = \esp[X_\theta^n]$ where $X_\theta$ is a random variable valued in $\{x_i(\theta)\}$ with
$$\pb[X_\theta = x_i(\theta)]\, =\, \frac{c_i(\theta)}{2(1-x_i(\theta))}\cdot$$
Using Remark \ref{Finito}, we get
\begin{eqnarray*}
p_n(1/\theta)\, =\, \pb[T_{1/\theta} =\infty] \, +\,\sum_{k\ge n} q_k(1/\theta) & = & \frac{(\theta, \theta^2)_\infty}{(\theta^2, \theta^2)_\infty + (\theta, \theta^2)_\infty} \, +\, \sum_{k\ge n} \theta^k q_k(\theta)\\
& = & \frac{(\theta, \theta^2)_\infty}{(\theta^2, \theta^2)_\infty + (\theta, \theta^2)_\infty} \, +\, \frac{1}{2}\,\sum_{i\ge 0} \frac{c_i(\theta)}{1-\theta x_i(\theta)}\, (\theta x_i(\theta))^n,
\end{eqnarray*}
which shows the required representation $p_n(1/\theta) = \esp[X_{1/\theta}^n]$ where $X_{1/\theta}$ is a random variable valued in $\{\theta x_i(\theta)\}\cup\{1\}$ with 
$$\pb[X_{1/\theta} = \theta x_i(\theta)]\, =\, \frac{c_i(\theta)}{2(1-\theta x_i(\theta))}\qquad\mbox{and}\qquad  \pb[X_{1/\theta} = 1]\, =\,\frac{(\theta, \theta^2)_\infty}{(\theta^2, \theta^2)_\infty + (\theta, \theta^2)_\infty}\cdot$$

\medskip

It remains to show the property for the function $D_\theta(z)$ which was mentioned at the beginning. The existence of one zero in each interval $]\theta^{-2i}, \theta^{-2i-1}[$ is guaranteed by the intermediate value theorem and the fact that $(-1)^i D_\theta(\theta^{-2i}) > 0$ and $(-1)^i D_\theta(\theta^{-2i-1}) < 0.$ We hence need to show that these zeroes are simple and that they are the only ones. To this end, fix $N > 0$ and consider a circle $\gamma_{N,\varepsilon}$ centered at the origin with radius $\theta^{-2N-1}+\varepsilon < \theta^{-2N-2},$ where $\varepsilon > 0$ is such that the function $D_\theta$ does not vanish on $\gamma_{N,\varepsilon}$. The latter condition is clearly possible by the principle of isolated zeros. For every $M \ge N,$ the polynomial function  
$$D_\theta^M(z)\, = \,\prod_{n=0}^M (1-\theta^{2n} z)\, + \, \prod_{n=0}^M(1-\theta^{2n+1} z)$$
vanishes on each interval $]\theta^{-2i}, \theta^{-2i-1}[$ for $i \in \{0,\ldots, M\}$ and thus possesses exactly $N+1$ zeroes inside the circle $\gamma_{N,\varepsilon}$. Setting $\eta = \min\{ \vert D_\theta(z)\vert, \; z\in\gamma_{N,\varepsilon}\} > 0,$ the uniform convergence of $D_\theta^M$ towards $D_\theta$ on compact sets as $M\to\infty$ shows that there exists $M_0 \ge N$ such that  
$$\lva D^M_\theta(z) - D_\theta(z)\rva \, <\, m\, \le \lva D_\theta(z)\rva$$
for all $M\ge M_0$ and $z\in\gamma_{N,\varepsilon}.$ 
Therefore, by Rouché's theorem - see e.g. Theorem V.3.8 in \cite{Conway}, the function $D_\theta$ has exactly $N+1$ zeroes counted with multiplicity inside the circle $\gamma_{N,\varepsilon},$ and we know that these zeroes lie in the required intervals. The result follows by letting $N \to \infty$.

\end{proof}

The above representation of $p_n(\theta)$ as the $n$-th moment of a discrete random variable gives a convergent series representation of $q_n(\theta)$ as $n\to\infty$ which considerably refines \eqref{Fine} in the case $x= 0.$ As seen indeed in the above proof, for every $\theta\in (0,1)$ one has
$$q_n(\theta)\, =\, \frac{1}{2}\,\sum_{i\ge 0} c_i(\theta) (x_i(\theta))^n\, \underset{n \to \infty}{\sim}\, \frac{c_0(\theta)}{2}\,(x_0(\theta))^n$$
and
$$q_n(1/\theta)\, =\, \frac{1}{2}\,\sum_{i\ge 0} c_i(\theta) (\theta x_i(\theta))^n\, \underset{n \to \infty}{\sim}\, \frac{c_0(\theta)}{2}\,(\theta x_0(\theta))^n,$$
showing that $c_0(\theta) = \lbd_\theta$ in \eqref{Fine}. These semi-explicit series representation come from the solvable character of the AR model with bi-exponential innovations. One may ask if this is not an example of a more general phenomenon for continuous innovations, in the case $\theta\neq 1.$ See Proposition 2.5 and Formula (26) in \cite{ABKS} for such a series representation in the case of uniform innovations. See also Theorem 10 in \cite{HKW} for a result in a more general framework which shows that if such a discrete random variable $X_\theta \in [0,1]$ exists, then for $\theta\in (0,1)$ the maximum $\lbd_\theta$ of its support must be isolated in $(0,1)$ and the singularity of $\varphi_\theta$ at $\mu_\theta$ is removable. Observe that by Theorem A, this claim amounts to showing that
\begin{equation}
\label{Stil}
\frac{1}{1- z\tpsi_{1/\theta}(z)}\, =\, \esp\lcr \frac{1}{1-zX_\theta}\rcr
\end{equation}
is the Cauchy-Stieltjes transform of this random variable $X_\theta$. We also might expect that the discrete support of $X_\theta$, which would form the "harmonics" of the persistence probabilities, is expressed in terms of the zeroes of a certain transcendental function. For uniform innovations this function is the deformed exponential - see Formula (6) in \cite{ABKS}, whereas for bi-exponential innovations it is the above function $D_\theta(z).$ 

In the random walk case $\theta = 1,$ the Baxter-Spitzer factorization implies that the random variable $X_1$ exists and is continuous if $\mathbb{P}[Z_n \ge 0]$ does not depend on $n,$ which happens when $X$ is symmetric continuous or strict stable\footnote{We are not aware of any other example.} with positivity parameter $\mathbb{P}(X \ge 0) = \rho.$ Here, one has 
$$\psi_1(z) \,= \, 1 \,-\,(1-z)^{1-\rho}  \qquad \text{and} \qquad q_n(1)\, = \,\frac{\sin(\pi p)}{\pi} \int_0^1 x^n \left(\frac{1-x}{x}\right)^{1-p} dx$$
so that $X_1$ has a generalized arcsine law, whose continuous character matches the expansion of the persistence probabilities at infinity which is algebraic. If $\pb[Z_n\ge 0]$ is not constant, it is however not clear to the authors whether such a continuous random variable $X_1$ always exists. 

Let us finally mention that combined to \eqref{Duelle}, the claimed Stieltjes representation \eqref{Stil} amounts to an additive factorization of the unit exponential random variable $\L$, which reads
$$\L \, \sim\, \L_\theta \, +\, {\tilde \L}_{1/\theta}$$
for all $\theta\in (0,1),$ where $\L_\theta \sim \L \times X_\theta$ and ${\tilde \L}_{1/\theta}\sim \L\times {\tilde X}_{1/\theta}$ are two exponential mixtures, and the sum is independent. Equivalently - see Proposition VI.3.5 in \cite{SV}, there exists two measurable functions $q_\theta$ and ${\tq}_{1/\theta}$ from $[1,\infty) \to [0,1]$ with $q_\theta + \tq_{1/\theta} = \Un_{[1,\infty)},$ such that
$$\varphi_\theta(z)\, =\, \exp \lcr\int_1^\infty \lpa \frac{1}{x-z} - \frac{1}{x}\rpa q_\theta(x)\, dx\rcr\quad\mbox{and}\quad \tphi_{1/\theta}(z)\, =\, \exp \lcr\int_1^\infty \lpa \frac{1}{x-z} - \frac{1}{x}\rpa \tq_{1/\theta}(x)\, dx\rcr.$$
Simple manipulations - see also Theorem VI.7.8 in \cite{SV} - imply then the moment representations
$$\frac{a_{n+1}(\theta)}{n+1}\, =\, \int_0^1 x^n \, q_\theta (1/x)\,dx \qquad \mbox{and}\qquad \frac{{\tilde a}_{n+1}(1/\theta)}{n+1}\, =\, \int_0^1 x^n\, \tq_{1/\theta} (1/x)\,dx$$
for all $n\ge 0,$ which would improve on \eqref{Loga}. In the case of symmetric bi-exponential innovations, our claim is true and we can compute the function $q_\theta$ explicitly in terms of the ordered sequence $\{z_i(\theta), i\ge 0\}$ of zeroes of the function $D_\theta(z).$  

\begin{Propo}
\label{LTZ}
With the above notations, one has
$$q_\theta\, =\, \sum_{i\ge 0} \Un_{[z_i(\theta), \theta^{-1}z_i(\theta)[}\qquad \mbox{and}\qquad q_{1/\theta}\, =\,\Un_{[1, z_0(\theta[}\, +\, \sum_{i\ge 0} \Un_{[\theta^{-1}z_i(\theta), z_{i+1}(\theta)[}$$
for all $\theta\in (0,1).$ 
\end{Propo}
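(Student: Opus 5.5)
The plan is to compute $\varphi_\theta$ in closed form as a ratio of values of the entire function $D_\theta$, then use its Hadamard product to turn $\log\varphi_\theta$ into a sum of integrals over the intervals between consecutive zeroes.

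\textbf{Step 1: closed form for $\varphi_\theta$.} Fix $\theta\in(0,1)$ and keep the notation $F_i(z)=(\theta^i z,\theta^2)_\infty$ from the proof of Proposition \ref{Lara}. The innovation is symmetric, so $\tpsi_{1/\theta}=\psi_{1/\theta}$. Theorem A then gives $\varphi_\theta(z)=1/(1-z\psi_{1/\theta}(z))$. Proposition \ref{Lara} and the identity $F_0=(1-z)F_2$ yield
$$1-z\psi_{1/\theta}(z)\,=\,\frac{F_2(z)+F_1(z)-zF_2(z)}{F_2(z)+F_1(z)}\,=\,\frac{F_0(z)+F_1(z)}{F_1(z)+F_2(z)}\,=\,\frac{D_\theta(z)}{D_\theta(\theta z)}.$$
Hence $\varphi_\theta(z)=D_\theta(\theta z)/D_\theta(z)$.

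\textbf{Step 2: product and integral representation.} The proof of Proposition \ref{Spektral} gives the Hadamard factorization $D_\theta(z)=2\prod_{i\ge0}(1-z/z_i(\theta))$. It also shows the zeroes are simple and real, with $z_i(\theta)\in\,]\theta^{-2i},\theta^{-2i-1}[$. Taking the ratio of the two convergent products gives
$$\varphi_\theta(z)\,=\,\prod_{i\ge0}\frac{1-z/(\theta^{-1}z_i(\theta))}{1-z/z_i(\theta)}.$$
For $1\le a<b$ and $|z|<1$ one has the elementary identity
$$\log\frac{1-z/b}{1-z/a}\,=\,\int_a^b\Big(\frac{1}{x-z}-\frac1x\Big)dx.$$
Apply it with $a=z_i(\theta)$ and $b=\theta^{-1}z_i(\theta)$, then sum over $i$.

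\textbf{Step 3: disjointness and summability.} The intervals $[z_i(\theta),\theta^{-1}z_i(\theta)[$ are pairwise disjoint and contained in $[1,\infty)$, because
$$1<\theta^{-2i}<z_i(\theta)<\theta^{-1}z_i(\theta)<\theta^{-2i-2}<z_{i+1}(\theta).$$
This gives the formula for $q_\theta$ with values in $\{0,1\}\subset[0,1]$. To exchange sum and integral, note that the integrand is bounded by $C|z|/x^2$ uniformly on compact subsets of the unit disk. Since the intervals are disjoint, the sum is dominated by $\int_1^\infty C\,x^{-2}dx<\infty$, so dominated convergence applies.

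\textbf{Step 4: the formula for $q_{1/\theta}$.} Use \eqref{Duelle}, again with $\tphi_{1/\theta}=\varphi_{1/\theta}$ by symmetry, together with
$$\frac1{1-z}\,=\,\exp\Big[\int_1^\infty\Big(\frac1{x-z}-\frac1x\Big)dx\Big].$$
This gives $\varphi_{1/\theta}$ in the stated exponential form with density $\Un_{[1,\infty)}-q_\theta$. That density is precisely $\Un_{[1,z_0(\theta)[}+\sum_{i\ge0}\Un_{[\theta^{-1}z_i(\theta),z_{i+1}(\theta)[}$.

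The only delicate point is the analytic justification in Steps 2 and 3: passing from the ratio of Hadamard products to the sum of logarithms, and interchanging sum and integral. Both rest on the zero localization proved in Proposition \ref{Spektral} and on the disjointness of the intervals, which makes the domination argument straightforward.
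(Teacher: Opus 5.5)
Your proposal is correct and follows essentially the paper's route. You use Theorem A and Proposition \ref{Lara} to write $\varphi_\theta$ as a quotient of order-zero entire functions, the Hadamard products from Proposition \ref{Spektral} to get $\varphi_\theta(z)=\prod_{i\ge 0}\frac{1-\theta z/z_i(\theta)}{1-z/z_i(\theta)}$, and \eqref{Duelle} for $q_{1/\theta}$, exactly as the paper does.

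Your identity $D_\theta(\theta z)-z(\theta^2z,\theta^2)_\infty=D_\theta(z)$, obtained from $F_0=(1-z)F_2$, makes explicit the paper's terse claim that the poles of $\varphi_\theta$ are the simple zeroes $z_i(\theta)$. Your Steps 3--4 also spell out the product-to-integral conversion that the paper calls ``easily seen''. One minor slip: it should read $1\le\theta^{-2i}$, with equality at $i=0$.
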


\begin{proof}
Fixing $\theta \in (0,1),$ it is enough to show the formula for $q_\theta,$ which is easily seen by our previous discussion to be equivalent to the convergent product representation
$$\varphi_\theta(z) \, =\, \prod_{i\ge 0} \lpa\frac{1-\theta x_i(\theta)z}{1- x_i(\theta) z}\rpa$$
with $x_i(\theta) = 1/z_i(\theta) \in \;]\theta^{2i+1},\theta^{2i}[$ for all $i\ge 0.$ Combining Theorem A and Proposition  \ref{Lara} shows that 
$$\varphi_\theta(z) \, =\,\frac{1}{1-z\psi_{1/\theta}(z)}\, =\, \frac{D_\theta(\theta z)}{D_\theta(\theta z) - z(\theta^2z,\theta^2)_\infty}$$
is the quotient of two entire functions of order zero, which can be expressed as an infinite product by the Hadamard factorization theorem. We have seen during the proof of Proposition \ref{Spektral} that $D_\theta(\theta z)\, =\, \prod_{i\ge 0} (1-\theta x_i(\theta)z)$ and the factorization
$$D_\theta(\theta z) - z(\theta^2z,\theta^2)_\infty\, =\, \prod_{i\ge 0} \Big(1-x_i(\theta)z\Big)$$
comes from the fact that the zeroes of the function on the left hand side are the poles of the function $\varphi_\theta,$ which are given by the sequence $\{z_i(\theta)\}$ and are simple. This completes the argument.

\end{proof}

\begin{Rem}
\label{Weighs}
{\em The computation also implies
$$a_n(\theta) \, =\, \Big( 1- \theta^n\Big)\sum_{i\ge 0} x_i(\theta)^n\qquad \mbox{and} \qquad a_n(1/\theta) \, =\, 1\,+\,\Big(\theta^n -1\Big)\sum_{i\ge 0} x_i(\theta)^n$$
for all $\theta\in (0,1)$ and $n\ge 1,$ to be compared with \eqref{Weigh}.}
\end{Rem}

\section*{Acknowledgement}
Thanks go to Jean-François Burnol for his advice on Proposition \ref{Spektral}.

\end{document}